\documentclass[3p,times,fleqn,preprint]{elsarticle}

\usepackage{ecrc}

\usepackage{etoolbox}
\makeatletter

\def\ps@pprintTitle{%
\def\@oddfoot{\footnotesize\itshape
 \ifx\@journal\@empty Preprint submitted to Elsevier
\else\@journal\fi\hfill May 3, 2014}%
\let\@evenfoot\@oddfoot}
\makeatother
\volume{00}

\firstpage{1}

\journalname{Nonlinear Analysis Series A: Theory, Methods \& Applications}

\runauth{R. H. De Staelen and M. Slodi\v{c}ka}


\jid{jde}

\jnltitlelogo{Nonlinear Analysis Series A}

\CopyrightLine{2014}{Author's personal copy}


\usepackage{amssymb,color}
\usepackage{amsthm}
\usepackage{amsmath,mathtools}    
\usepackage[subnum]{cases}

 \usepackage{lineno}




\usepackage[figuresright]{rotating}





 


\definecolor{darkgreen}{rgb}{0,0.6,0}

\newcommand{\del}[1]{{\color{red}}}
\newcommand{\overslaan}[1]{}

\DeclareMathOperator{\Leb}{L}
\DeclareMathOperator{\Cont}{C}
\DeclareMathOperator{\Hi}{H}
\DeclareMathOperator{\di}{d\hspace{-1.5pt}}


\def\half{{\textstyle\frac{1}{2}}}

\newcommand {\I}{[0,T]}

\newcommand {\Iopen}{(0,T)}

\def\refe#1{(\ref{#1})}

\newcommand {\dt}{\ \di t}

\newcommand {\X}{\vector{x}}
\newcommand {\dX}{\ \di \X}
\newcommand {\var}{(\X,t)}

\newcommand {\abs}[1]{\left|#1\right|}
\newcommand {\vnorma}[1]{\left\|#1\right\|}



\newcommand {\D} {\displaystyle}

\newcommand {\weak } {\rightharpoonup }

\newcommand {\normal}{\vector{\nu}}


\newcommand {\domain}{\Omega}

\newcommand {\conTk}[1]{\Cont^{#1}(\I)}

\newcommand {\hk}[1]{\Hi^{#1}(\domain )}

\newcommand {\lp}[1]{\Leb^{#1} (\domain )}

\newcommand {\lpG}[1]{\Leb^{#1} (\Gamma)}

\newcommand {\ckIX}[2]{\Cont^{#1}\left(\I,#2\right)}
\newcommand {\ckIlp}[2]{\Cont^{#1}\left(\I,\lp{#2}\right)}
\newcommand {\ckIlpG}[2]{\Cont^{#1}\left(\I,\lpG{#2}\right)}

\newcommand {\scal}[2]{\left(#1,#2\right)}

\def \vector#1{\mathbf{#1}}

\def\qqed{{\ifhmode\unskip\nobreak\fi%
         \ifvmode\vskip-\the\normalbaselineskip\leavevmode\unskip\nobreak\fi%
         \leaders\hbox to 1em{}\hfill%
         \ifmmode\clubsuit\else$\mathsurround0pt\square$\fi%
         }}

\def\mt{t\kern-0.035cm\char39\kern-0.03cm}
\def\ml{l\kern-0.035cm\char39\kern-0.03cm}
\def\md{d\kern-0.035cm\char39\kern-0.03cm}
\def\mL{L\kern-0.035cm\char39\kern-0.03cm}

\newcommand {\veps} {{\varepsilon}}

\newcommand {\NN } {{\mathbb N}}
\newcommand {\RR } {{\mathbb R}}

\newcommand {\drm}{{\rm d}}

\newtheorem{stel}{Theorem}

\newtheorem{lem}[subsection]{Proposition}



\begin{document}

\begin{frontmatter}



\dochead{PREPRINT}

\title{Reconstruction of a convolution kernel in a semilinear parabolic problem based on a global measurement}

\author{R. H. De Staelen}
\author{M. Slodi\v{c}ka}
\address{Department of Mathematical Analysis, research group of Numerical Analysis and Mathematical Modeling (NaM$^\text{\it2}$\hspace{-0.5mm}),\\ Ghent University, Galglaan 2 - S22, Gent 9000, Belgium}
\ead{rob.destaelen@ugent.be}
\ead[url]{http://cage.ugent.be/~rds}
\ead{marian.slodicka@ugent.be}
\ead[url]{http://cage.ugent.be/~ms}


\begin{abstract}
A semilinear parabolic problem of second order with an unknown time-convolution kernel is considered. The missing  kernel is recovered from an additional  integral measurement. The existence, uniqueness and  regularity  of a weak solution is addressed. 
We design a numerical algorithm based on Rothe's method, derive  a priori estimates and prove convergence of iterates towards the exact solution.
\end{abstract}

\begin{keyword}
parabolic IBVP \sep convolution kernel \sep reconstruction \sep convergence \sep time discretization


\end{keyword}

\end{frontmatter}


\section{Introduction}
\label{sec:introduction}

The general nature of an inverse problem (IP) is to deduce a cause from an effect.  
IPs typically lead to mathematical models that are ill-posed in the sense of Hadamard -- see \cite{hadamard.53}.
   Moreover, ill-posed problems frequently turn out to be numerically unstable (sensitive to small errors in the known data), in that small changes in the known data may lead to arbitrarily large changes in the response. 
   Many IPs do not have a solution in the strict classical sense, or if there is a solution, it might not be unique or might not depend continuously on the data. To obtain global in time existence and
uniqueness of a solution is in general a very  difficult part of the problem. The second important component of the task is to describe a constructive way how to find the solution. 
The usual algorithms start with parametrization of the problem and they make use of  continuous dependence of a parametrized solution on the parameter. An error/cost functional is constructed and minimized  in suitable function spaces linked to the setting under consideration. 
The bottleneck of this approach is convexity of the functional, caused by ill-posedness of the IP. In most cases the missing convexity is remediated by an appropriate regularization cf. e.g. \cite{engl,rieder,isakov}. The Tikhonov-regularization is based on adding a suitable term to 
the functional in order to guarantee its convexity, ensuring the existence of a unique solution to the minimization problem. 
This later problem can be solved numerically by adequate approximation techniques, such as the steepest descend, Ritz or Newton or Levenberg-Marquardt method, see e.g.  \cite{miller, pedegral}.

In this paper, we are interested in determining of the unknown couple $\langle u,K\rangle$ obeying the following semilinear parabolic problem
\begin{equation}
	\label{problem}
	\left\{\begin{array}{lcl}
	\partial_t u\var -\Delta u\var +K(t)h\var +(K\ast u(\X))(t)=f(\X,t,u\var ,\nabla u\var ),\quad \text{in } \domain\times I,
	\\[.2cm]
	-\nabla u\var\cdot \normal=g\var,  \quad \text{on } \Gamma\times I,\\[.2cm]
	u(\vector x,0)=u_0(\vector x),\quad \text{in } \domain,
	\end{array}\right.
\end{equation}
where $\domain$ is a Lipschitz domain (cf. \cite{Kufner}) in $\RR^N$, $N\ge 1$, with $\partial\domain=\Gamma$ and $I=\I$, $T>0$ in the time frame. By $K\ast u$ we denote the usual convolution in time, namely 
$\D (K\ast u(\X))(t)=\int_0^t K(t-s)u(\X,s)\di s$ . The missing  time-convolution kernel $K=K(t)$ will be recovered from the following integral-type measurement
\begin{equation}
	\label{problem.K}
	\int_\domain u(\vector x,t)\dX   =m(t),\qquad t\in\I.
\end{equation}
The integral  type over-determination in IPs combined with evolutionary PDEs  has been studied in several papers, e.g. \cite{prilepko.orlovsky.vasin,lesnic_2011,slod13.6} and the references therein.

Such type of integro-differential problems arise for example elastoplasticity (cf. \cite{renardy_hrusa_nohel_1987}) or in the theory of reactive contaminant transport. In \cite{delleur} one considers the following differential equation 
\[\partial_t C+\nabla\cdot(\vector{V}C)-\Delta C=\frac{-\rho_b}{n}\partial_t S\]
for the aqueous concentration $C$ and sorbed concentration per unit mass of solid $S$
with mass transformation rate in first order kinetics form of
\[\partial_t S=K_r(K_d C-S)\]
with desorption rate $K_r$ and equilibrium distribution coefficient $K_d$. This is indeed a problem of type \refe{problem} for $u=C$ with 
$K(t)=-\frac{\rho_b}{n}K_r^2K_de^{-K_rt}$, $h(t)=-\frac{S_0}{K_r K_d}$ and $f(x,\vector r)=\frac{-\rho_b}{n}K_rK_d x -\vector{V}\cdot\vector r$.

Identification of missing memory kernels in evolutionary PDEs is relatively new in IPs. We are aware of only a few papers dealing with this topics, namely \cite{colombo_guidetti_lorenzi_2003,colombo_guidetti_vespri_2006,colombo_guidetti_2007,guidetti_2007,colombo_guidetti_2011}. 
In \cite{colombo_guidetti_vespri_2006} a global in time existence and uniqueness result for an inverse problem arising in the theory of heat conduction for materials with memory has been studied. The reference \cite{colombo_guidetti_2011} derives some local and global in time existence results for 
the recovery of memory kernels. There is no description of constructive algorithms how to find a solution. 

The main goal of this paper is to design a productive numerical scheme describing the way of retrieving the couple $\langle u,K\rangle$. This is achieved not by minimization of a cost functional (which is typical for IPs) but on the time discretization based on Rothe's method \cite{rektorys,kacur}.
First, we start with derivation of a suitable variational formulation. Section \ref{sec:stability} is devoted to the study of regularity of a weak solution, and the uniqueness is addressed in Theorem \ref{thm:uniqueness}. Section \ref{sec:discretization} deals with a time discretization, 
where (based on backward Euler scheme) the continuous problem is approximated by a sequence of steady state settings at each point of a time partitioning. Stability analysis of approximates is performed in appropriate function spaces 
and  convergence (based on compactness argument) is established in Theorem \ref{thm:existence}.

\paragraph{Notations}
Denote by $\scal{\cdot}{\cdot}$ the standard inner product of $\lp{2}$ and $\vnorma{\cdot}$ its induced norm. When working at the boundary $\Gamma$ we use a similar notation, namely 
$\scal{\cdot}{\cdot}_\Gamma$,  $\lpG{2}$ and $\vnorma{\cdot}_\Gamma$. By $\ckIX{}{X}$ we denote the set of abstract functions $w:\I\to X$ equipped with the usual norm $\max_{t\in\I}\vnorma{\cdot}_X$ and 
$\Leb^p\left(\Iopen,X\right)$ is furnished with the norm $\D\left(\int_0^T\vnorma{\cdot}_X^p\dt\right)^{\frac 1p}$ with $p>1$, cf. \cite{gajewski}.
The symbol $X^*$ stands for the dual space to $X$.

We take a test function $\phi\in\hk{1}$, and derive from \refe{problem} after integration over $\domain$ that
\begin{equation}
	\label{Pvar}
	\scal{\partial_t u}{\phi}-\scal{\Delta u}{\phi}+K\scal{h}{\phi}+\scal{K\ast u}{\phi}=\scal{f(u,\nabla u)}{\phi}.
\end{equation}
Make use of Green's first identity to obtain
\begin{equation}
	\tag{P}\label{P}
	\scal{\partial_t u}{\phi}+\scal{\nabla u}{\nabla\phi}+\scal{g}{\phi}_{\Gamma}+K\scal{h}{\phi}+\scal{K\ast u}{\phi}=\scal{f(u,\nabla u)}{\phi},
\end{equation}
If we set $\phi=1$ in \refe{P}   we obtain together with the measurement $(u(t),1)=m(t)$ that
\[m'+(g,1)_{\Gamma}+K(h,1)+K\ast m=(f(u,\nabla u),1).\tag{MP}\label{MP}\]
The relations \refe{P} and \refe{MP} represent the variational formulation of \refe{problem} and \refe{problem.K}.

Finally, as is usual in papers of this sort, $C , \veps$ and $C_\veps$ will denote generic positive constants depending only on a priori
known quantities, where $\veps$ is small and $C_\veps=C\left(\veps^{-1}\right)$ is large.

\section{Stability analysis of a solution, uniqueness}
\label{sec:stability}

First, we start with a study of natural regularity of a solution $\langle u,K\rangle$. This helps us to choose  appropriate function spaces for the variational framework. Uniqueness of a solution is addressed at the end of this section.
\begin{lem}
	\label{K_bound} 
	Let $f$ be bounded, i.e. $|f|\le C$. Moreover assume that $u_0\in\lp{2}$, $g\in\ckIlpG{}{2}$, $h\in\ckIlp{}{2}$, $ \min_{t\in\I} | (h(t),1)| \ge\omega>0$ and $m\in\conTk{1}$. If $\langle u,K\rangle$ is a solution of \refe{problem} and \refe{problem.K}, then $K$ is bounded on $\I$, i.e.
	\[\max_{t\in\I}|K(t)|\le C.\]
\end{lem}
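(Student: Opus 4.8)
The plan is to start from the reduced measurement equation \refe{MP}, obtained by testing \refe{P} with $\phi=1$ and inserting $(u(t),1)=m(t)$; this isolates $K$ algebraically. The hypothesis $\min_{t\in\I}|(h(t),1)|\ge\omega>0$ guarantees that the coefficient of $K(t)$ never degenerates, so I would divide \refe{MP} by $(h(t),1)$ and write
\[
K(t)=\frac{(f(u,\nabla u),1)-m'(t)-(g,1)_\Gamma-(K\ast m)(t)}{(h(t),1)}.
\]
Taking absolute values and using $|(h(t),1)|\ge\omega$ then reduces the claim to estimating each term in the numerator.

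Next I would bound the terms not involving $K$ by a constant, uniformly in $t\in\I$. The boundedness $|f|\le C$ together with $|\domain|<\infty$ (recall $\domain$ is a bounded Lipschitz domain) yields $|(f(u,\nabla u),1)|\le C|\domain|\le C$, independently of $u$ and $\nabla u$; this is where the assumption on $f$ is essential, since it removes any dependence on the unknown solution. The assumption $m\in\conTk{1}$ makes $m'$ continuous, hence bounded on the compact interval $\I$, and also bounds $m$ itself. Finally, Cauchy--Schwarz on $\Gamma$ gives $|(g,1)_\Gamma|\le\vnorma{g(t)}_\Gamma\,|\Gamma|^{1/2}$, which is bounded because $g\in\ckIlpG{}{2}$.

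The only term still carrying the unknown is the convolution $(K\ast m)(t)=\int_0^t K(t-s)m(s)\,\di s$. Using $m\in\conTk{1}$ to bound $|m(s)|\le C$, I would estimate
\[
|(K\ast m)(t)|\le C\int_0^t|K(s)|\,\di s.
\]
Collecting all bounds produces an integral inequality of Gr\"onwall type, $|K(t)|\le C+C\int_0^t|K(s)|\,\di s$ for every $t\in\I$, and Gr\"onwall's lemma then yields $\max_{t\in\I}|K(t)|\le Ce^{CT}\le C$ because $T$ is fixed, which is exactly the assertion.

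I expect the convolution term to be the main obstacle: unlike the other contributions it cannot be bounded outright, so the estimate has to be closed self-referentially through Gr\"onwall's inequality rather than by a single direct a priori bound. A secondary technical point is that invoking Gr\"onwall presupposes $K$ to be at least locally integrable on $\I$; this should be part of the solution concept, since the displayed formula expresses $K$ as a sum of a continuous function and the convolution $K\ast m$, so the integral inequality is legitimate and its exponential bound is finite on the fixed horizon $T$.
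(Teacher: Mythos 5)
Your proposal is correct and follows essentially the same route as the paper: test \refe{P} with $\phi=1$ to get \refe{MP}, use $\abs{(h(t),1)}\ge\omega$ to isolate $K(t)$, bound $(f,1)$, $m'$, and $(g,1)_\Gamma$ by constants, dominate the convolution term by $C\int_0^t\abs{K(s)}\di s$, and close with Gr\"onwall's lemma. Your additional remark on the local integrability of $K$ needed to invoke Gr\"onwall is a sensible technical point that the paper leaves implicit.
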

\begin{proof}
Take any $t\in\I$. 
From \refe{MP} it follows that  
\[\abs{K(t)(h(t),1)}\le  \abs{(f(u(t),\nabla u(t)),1)}+\abs{(K\ast m)(t)}+\abs{m'(t)}+\abs{(g(t),1)_{\Gamma}}.\]
Involving the assumptions we see that 
\[\omega \abs{K(t)} \le \abs{(h(t),1)}\abs{K(t)}\le  C+\abs{(K\ast m)(t)}\le C+C\int_0^t\abs{K(s)}\di s.\]
We conclude the proof by Gr\"onwall's argument, cf. \cite{bainov}.
\end{proof}

\begin{lem}
	\label{u_bound} 
	Let the conditions of Proposition~\ref{K_bound} be satisfied.  If $\langle u,K\rangle$ is a  solution of \refe{problem} and \refe{problem.K}, then there exists $C>0$   such that
	\begin{itemize}
		\item [(i)] $\D\max_{t\in\I}\vnorma{u(t)}^2+\int_0^T\vnorma{\nabla u(\xi)}^2\drm \xi\le  C$
		\item[(ii)] $\D\int_0^T\vnorma{\partial_t u}^2_{\left(\hk{1}\right)^*}\le C$.
	\end{itemize}
\end{lem}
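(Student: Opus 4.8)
The plan is to obtain part (i) by the classical energy method, testing the variational identity \refe{P} with $\phi=u(t)$, and then to deduce part (ii) by a duality argument read off directly from \refe{P}. Throughout I would use the facts already at hand: $K$ is bounded by Proposition~\ref{K_bound}, $\vnorma{f}\le C$ follows from $|f|\le C$ on the bounded domain $\domain$, and $g\in\ckIlpG{}{2}$, $h\in\ckIlp{}{2}$ are bounded in their respective spaces. Setting $\phi=u$ in \refe{P} and using $\scal{\partial_t u}{u}=\half\frac{\drm}{\drm t}\vnorma{u}^2$ gives
\[
\half\frac{\drm}{\drm t}\vnorma{u}^2+\vnorma{\nabla u}^2
=\scal{f(u,\nabla u)}{u}-\scal{g}{u}_\Gamma-K\scal{h}{u}-\scal{K\ast u}{u}.
\]

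Next I would integrate this over $(0,t)$ and estimate the right-hand side term by term. The source term is controlled by $\vnorma{f}\vnorma{u}\le C+C\vnorma{u}^2$, and $K\scal{h}{u}$ likewise by $C+C\vnorma{u}^2$, since $K$ and $\vnorma{h}$ are bounded. For the convolution term I would first bound $\vnorma{(K\ast u)(\xi)}\le C\int_0^\xi\vnorma{u(s)}\ds$ using $|K|\le C$, then combine Young's inequality, Cauchy--Schwarz and an interchange of the order of integration to reduce its time integral to a term of the form $C\int_0^t\vnorma{u(\xi)}^2\drm\xi$. The delicate term is the boundary contribution $\scal{g}{u}_\Gamma$: here I would invoke a trace inequality of Ehrling type, valid on Lipschitz domains, $\vnorma{u}_\Gamma^2\le\veps\vnorma{\nabla u}^2+C_\veps\vnorma{u}^2$, so that $|\scal{g}{u}_\Gamma|\le\half\vnorma{g}_\Gamma^2+\veps\vnorma{\nabla u}^2+C_\veps\vnorma{u}^2$, and choosing $\veps$ small lets the $\veps\int_0^t\vnorma{\nabla u}^2$ piece be absorbed into the left-hand side. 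After this absorption the estimate reduces to
\[
\vnorma{u(t)}^2+\int_0^t\vnorma{\nabla u(\xi)}^2\drm\xi\le C+C\int_0^t\vnorma{u(\xi)}^2\drm\xi,
\]
whence Grönwall's lemma yields the uniform bound on $\vnorma{u(t)}^2$; feeding this back bounds $\int_0^T\vnorma{\nabla u}^2$, establishing (i).

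For (ii) I would read \refe{P} as an identity for $\scal{\partial_t u}{\phi}$ with arbitrary $\phi\in\hk{1}$, moving every other term to the right. Each is bounded by $C\vnorma{\phi}_{\hk{1}}$: the diffusion term by $\vnorma{\nabla u}\,\vnorma{\phi}_{\hk{1}}$, the boundary term by the trace inequality, and the remaining zeroth-order terms using the boundedness of $K$, $\vnorma{h}$, $\vnorma{f}$ and the pointwise-in-time bound on $\vnorma{K\ast u}$ already derived. Taking the supremum over $\vnorma{\phi}_{\hk{1}}\le1$ gives $\vnorma{\partial_t u}_{\left(\hk{1}\right)^*}\le C+C\vnorma{\nabla u}$, and squaring and integrating over $\I$ together with part (i) yields (ii). The main obstacle throughout is the boundary term: one genuinely needs the sharp trace inequality that trades $\vnorma{u}_\Gamma^2$ against a small multiple of $\vnorma{\nabla u}^2$ plus a controllable $\vnorma{u}^2$, since a crude estimate by $\vnorma{u}_{\hk{1}}^2$ would leave an unabsorbable $\vnorma{\nabla u}^2$ on the right-hand side and break the argument.
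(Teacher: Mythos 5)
Your proof is correct and follows essentially the same route as the paper: testing \refe{P} with $\phi=u$, estimating term by term (including the same reduction of the convolution term to $C\int_0^t\vnorma{u}^2\drm\xi$), closing part (i) with Gr\"onwall's lemma, and then reading \refe{P} as a bound on the functional $\phi\mapsto\scal{\partial_t u}{\phi}$ for part (ii). The one point worth correcting is your closing claim that an Ehrling-type trace inequality is \emph{genuinely needed} and that the crude estimate $\vnorma{u}_\Gamma\le C\vnorma{u}_{\hk{1}}$ would break the argument. It does not: the paper itself uses exactly that crude trace bound, followed by Young's inequality \emph{with a small parameter}, which yields $\abs{\scal{g}{u}_\Gamma}\le C_\veps\vnorma{g}_\Gamma^2+\veps\vnorma{u}_{\hk{1}}^2=C_\veps\vnorma{g}_\Gamma^2+\veps\vnorma{u}^2+\veps\vnorma{\nabla u}^2$; the $\veps\vnorma{\nabla u}^2$ piece is absorbable into the left-hand side precisely because $\veps$ can be chosen arbitrarily small (the left-hand side carries the gradient with coefficient $1$), and the $\veps\vnorma{u}^2$ piece simply joins the Gr\"onwall term. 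Your Ehrling inequality (which is valid on Lipschitz domains, by compactness of the trace operator) is a sufficient but stronger tool than required; the parametrized Young inequality already supplies the smallness you were worried about, so both variants of the boundary estimate lead to the same conclusion.
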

\begin{proof}
$(i)$
If we set $\phi=u$ in \refe{P} and integrate in time over $(0,t)$ we obtain 
\begin{equation}
	\int_0^t(\partial_t u,u)\drm \xi+\int_0^t(\nabla u,\nabla u)\drm \xi+\int_0^t(g,u)_{\Gamma}\drm \xi+\int_0^tK(h,u)\drm \xi+\int_0^t(K\ast u,u)\drm \xi=\int_0^t(f(u,\nabla u),u)\drm \xi.
	\label{Pu}
\end{equation}
The first two terms can be rewritten as 
\[\int_0^t(\partial_t u,u)\drm \xi=\frac{1}{2}\vnorma{u(t)}^2-\frac{1}{2}\vnorma{u_0}^2,\quad \int_0^t(\nabla u,\nabla u)\drm \xi=\int_0^t\vnorma{\nabla u(\xi)}^2\drm \xi.\]
For the third one we get
\[
	\abs{\int_0^t(g,u)_{\Gamma}\drm \xi}
	\le  \int_0^t\vnorma{g}_{\Gamma}\vnorma{u}_{\Gamma}\drm \xi
	\le  C\int_0^t\vnorma{g}_{\Gamma}\vnorma{u}_{\hk{1}}\drm \xi
	\le  C_\veps\int_0^t\vnorma{g}^2_{\Gamma}+\veps\int_0^t\vnorma{u}^2_{\hk{1}}\drm \xi
\]
by Cauchy's inequality, the trace theorem and Young's inequality. The fourth term is easily bounded by
\[\abs{\int_0^tK(h,u)\drm \xi}\le  \int_0^t\abs{K}\vnorma{h}\vnorma{u}\drm \xi\le  C \int_0^t\vnorma{h}^2\drm \xi+ C \int_0^t\vnorma{u}^2\drm \xi,\]
as $K$ is bounded, see Proposition~\ref{K_bound}. It holds 
\begin{equation}
	\label{eq:convolution}
	\vnorma{(K\ast u)(t)}^2=\int_\domain \left(\int_{0}^{t}K(t-s)u(\X ,s)\di s \right)^2\dX \le  \int_\domain \int_{0}^{t}K^2(t-s)\int_{0}^{t}u^2(\X,s)\di s\dX \le C \int_0^t \vnorma{u(s)}^2\di s.
\end{equation}
 The last term in the left-hand side of \refe{Pu} is
\[
	\abs{\int_0^t(K\ast u,u)\drm \xi}\le  \int_0^t\vnorma{K\ast u}\vnorma{u}\drm \xi
	\le  \frac{1}{2}\int_0^t\vnorma{K\ast u}^2\drm \xi+\frac{1}{2}\int_0^t\vnorma{u}^2\drm \xi \le   C \int_0^t\vnorma{u}^2\drm \xi.
\]
 The right-hand side of \refe{Pu} can be estimated as follows 
\[
	\abs{\int_0^t(f(u,\nabla u),u)\drm \xi}\le  \int_0^t\vnorma{f(u,\nabla u)}\vnorma{u}\drm \xi
	\le  \frac{1}{2}\int_0^t\vnorma{f(u,\nabla u)}^2\drm\xi+\frac{1}{2}\int_0^t\vnorma{u}^2\drm\xi
	\le  C+\frac{1}{2}\int_0^t\vnorma{u}^2\drm\xi,
\]
as $f$ is bounded. 

Putting all things together, fixing a sufficiently small $\veps>0$ and taking into account  $\vnorma{u}^2_{\hk{1}}=\vnorma{u}^2+\vnorma{\nabla u}^2$ we obtain
\[\vnorma{u(t)}^2+\int_0^t\vnorma{\nabla u(\xi)}^2\drm \xi\le  C+ C \int_0^t\vnorma{u}^2\drm \xi,\]
which is valid for any $t\in\I$. An application of  Gr\"onwall's lemma concludes the proof.

$(ii)$
Starting from \refe{P} and using the Cauchy inequality, Lemma \ref{K_bound}, \refe{eq:convolution}, trace theorem   and  Lemma \ref{u_bound}$(i)$ we successively deduce that
\[
	\begin{array}{rlll}\D
		\abs{\scal{\partial_t u}{\phi}} &\D=  \abs{\scal{f(u,\nabla u)}{\phi} -\scal{\nabla u}{\nabla\phi}-\scal{g}{\phi}_{\Gamma}-K\scal{h}{\phi}+\scal{K\ast u}{\phi}}
		\\&\D\le C\left( \vnorma{\phi} +\vnorma{\nabla u}\vnorma{\nabla\phi} +\vnorma{\phi}_\Gamma + \sqrt{\int_0^t\vnorma{u}^2}\ \vnorma{\phi}\right)
		\\&\D\le C\left( \vnorma{\nabla u}\vnorma{\nabla\phi}+\vnorma{\phi}_{\hk{1}}\right).
	\end{array}
\]
Thus $ \scal{\partial_t u}{\phi}$ can be seen as a linear functional on $\hk{1}$ and we may write
\[
	\vnorma{\partial_t u}_{\left(\hk{1}\right)^*}=\sup_{\vnorma{\phi}_{\hk{1}}\le 1} \abs{\scal{\partial_t u}{\phi}}\le C \left( 1+ \vnorma{\nabla u}\right),
\]
which implies that 
\[
	\int_0^T\vnorma{\partial_t u}^2_{\left(\hk{1}\right)^*}\le C + C \int_0^T \vnorma{\nabla u}^2\drm \xi\le C.
\]
\end{proof}

\begin{lem}
	\label{gradu_bound} 
	Let the conditions of Proposition~\ref{K_bound} be satisfied and moreover $g\in\ckIlpG{1}{2}$  and $u_0\in\hk{1}$. 
	If $\langle u,K\rangle$ is a  solution of \refe{problem} and \refe{problem.K}, then  there exists $C>0$ such that
	\[\max_{t\in\I}\vnorma{\nabla u(t)}^2+\int_0^T\vnorma{\partial_t u(\xi)}^2\drm \xi\le  C.\]
\end{lem}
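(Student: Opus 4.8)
The plan is to derive a higher-order energy estimate by testing the variational identity \refe{P} with $\phi=\partial_t u$ and integrating over $(0,t)$ for an arbitrary $t\in\I$. The two structurally important terms are $\scal{\partial_t u}{\partial_t u}=\vnorma{\partial_t u}^2$, which produces the $\int_0^T\vnorma{\partial_t u}^2$ wanted on the left, and $\scal{\nabla u}{\nabla\partial_t u}=\half\frac{\drm}{\drm \xi}\vnorma{\nabla u}^2$, whose time integral yields $\half\vnorma{\nabla u(t)}^2-\half\vnorma{\nabla u_0}^2$; here the hypothesis $u_0\in\hk{1}$ is exactly what guarantees the initial term is finite. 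All remaining terms must then be moved to the right-hand side and absorbed.

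The hard part will be the boundary contribution $\int_0^t\scal{g}{\partial_t u}_\Gamma\drm\xi$, because a time derivative of $u$ on $\Gamma$ cannot be controlled by $\vnorma{\partial_t u}$ in $\lp{2}$. The remedy, and the reason for the new assumption $g\in\ckIlpG{1}{2}$, is integration by parts in time:
\[\int_0^t\scal{g}{\partial_t u}_\Gamma\drm\xi=\scal{g(t)}{u(t)}_\Gamma-\scal{g(0)}{u_0}_\Gamma-\int_0^t\scal{\partial_t g}{u}_\Gamma\drm\xi.\]
I would estimate the endpoint term by the trace theorem and Young's inequality, $\abs{\scal{g(t)}{u(t)}_\Gamma}\le C\vnorma{g(t)}_\Gamma\vnorma{u(t)}_{\hk{1}}\le\veps\vnorma{\nabla u(t)}^2+C_\veps$, using that $\vnorma{u(t)}^2$ is already controlled by Proposition~\ref{u_bound}$(i)$; the $\veps\vnorma{\nabla u(t)}^2$ piece is then absorbed into $\half\vnorma{\nabla u(t)}^2$ on the left by fixing $\veps$ small. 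The integral term is bounded similarly, using that $\partial_t g\in\ckIlpG{}{2}$ is bounded and that $\int_0^t\vnorma{u}_{\hk{1}}^2$ is finite by Proposition~\ref{u_bound}$(i)$.

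For the three remaining terms I would use Young's inequality so that each generates a small multiple $\veps\vnorma{\partial_t u}^2$ (absorbed into the left) plus a quantity already known to be bounded: for $\int_0^t K\scal{h}{\partial_t u}\drm\xi$ I use boundedness of $K$ (Proposition~\ref{K_bound}) and of $h$; for $\int_0^t\scal{K\ast u}{\partial_t u}\drm\xi$ I use \refe{eq:convolution} together with Proposition~\ref{u_bound}$(i)$; and for the right-hand side $\int_0^t\scal{f(u,\nabla u)}{\partial_t u}\drm\xi$ I use $\abs{f}\le C$. After collecting everything and choosing $\veps$ small enough to absorb all $\veps\vnorma{\partial_t u}^2$ and $\veps\vnorma{\nabla u(t)}^2$ contributions, the right-hand side consists solely of constants and of $\int_0^t\vnorma{\nabla u}^2\drm\xi$, which is itself bounded by Proposition~\ref{u_bound}$(i)$. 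Hence one obtains $\vnorma{\nabla u(t)}^2+\int_0^t\vnorma{\partial_t u}^2\drm\xi\le C$ uniformly in $t\in\I$, which is the claim; note that no Gr\"onwall argument is actually required, the estimate closing directly because the $\nabla u$ integral on the right is already a priori bounded.
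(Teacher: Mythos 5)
Your proof is correct and follows essentially the same route as the paper: testing \refe{P} with $\phi=\partial_t u$, integrating by parts in time to handle the boundary term (which is exactly where $g\in\ckIlpG{1}{2}$ enters), and absorbing the kernel, convolution and source terms via Young's inequality together with Proposition~\ref{K_bound}, \refe{eq:convolution} and Proposition~\ref{u_bound}. The only difference is cosmetic: the paper closes the estimate with Gr\"onwall's lemma, whereas you correctly observe that the a priori bound on $\int_0^t\vnorma{\nabla u}^2\drm\xi$ from Proposition~\ref{u_bound}$(i)$ makes the estimate close directly.
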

\begin{proof}
If we set $\phi=\partial_t u$ in \refe{P} and integrate in time we obtain 
\begin{equation}
	\int_0^t(\partial_t u,\partial_t u)\drm \xi+\int_0^t(\nabla u,\nabla \partial_t u)\drm \xi+\int_0^t(g,\partial_t u)_{\Gamma}\drm \xi+\int_0^tK(h,\partial_t u)\drm \xi+\int_0^t(K\ast u,\partial_t u)\drm \xi=\int_0^t(f(u,\nabla u),\partial_t u)\drm \xi.
	\label{Put}
\end{equation}
The first two terms can be rewritten as 
\[\int_0^t(\partial_t u,\partial_t u)\drm \xi=\int_0^t\vnorma{\partial_tu(\xi)}^2\drm\xi,\quad \int_0^t(\nabla u,\nabla \partial_t u)\drm \xi=\frac{1}{2}\vnorma{\nabla u(t)}^2 -\frac{1}{2}\vnorma{\nabla u_0}^2 .\]
For the third one we first integrate by parts,
\[\int_0^t(g,\partial_t u)_{\Gamma}\drm \xi=(g(t),u(t))_{\Gamma}-(g(0),u_0)_{\Gamma}-\int_0^t(\partial_t g, u)_{\Gamma}\drm \xi\]
and get
\[\abs{\int_0^t(g,\partial_t u)_{\Gamma}\drm \xi}\le  \vnorma{g(t)}_\Gamma \vnorma{u(t)}_\Gamma +\vnorma{g(0)}_\Gamma \vnorma{u_0}_\Gamma +\int_0^t \vnorma{\partial_t g}_\Gamma \vnorma{u}_\Gamma \drm \xi\]
\[\le  C_{\veps}+\veps\vnorma{u}^2_{\hk{1}}+C\int_0^t \vnorma{u}^2_{\hk{1}}\drm \xi\]
by Cauchy's inequality, the trace theorem and Young's inequality. 
The fourth term is easily bounded by
\[\abs{\int_0^tK(h,\partial_tu)\drm \xi}\le  \int_0^t\abs{K}\vnorma{h}\vnorma{\partial_tu}\drm \xi\le  C_{\veps} \int_0^t\vnorma{h}^2\drm \xi+ \veps \int_0^t\vnorma{\partial_tu}^2\drm \xi,\]
as $K$ is bounded, see Proposition~\ref{K_bound}.
The last term in the left-hand side of \refe{Put} can be estimated using \refe{eq:convolution} and  Proposition~\ref{u_bound} as follows 
\[\abs{\int_0^t(K\ast u,\partial_tu)\drm \xi}\le  \int_0^t\vnorma{K\ast u}\vnorma{\partial_tu}\drm \xi
\le  C_{\veps} \int_0^t\vnorma{K\ast u}^2\drm \xi+\veps\int_0^t\vnorma{\partial_tu}^2\drm \xi \le C_\veps +  \veps\int_0^t\vnorma{\partial_tu}^2\drm \xi. \] 
For the right-hand side of \refe{Put} we deduce that 
\[\abs{\int_0^t(f(u,\nabla u),\partial_tu)\drm \xi}\le  \int_0^t\vnorma{f(u,\nabla u)}\vnorma{\partial_tu}\drm \xi
\le  C_{\veps} +\veps\int_0^t\vnorma{\partial_tu}^2\drm\xi,  \]
as $f$ is bounded. 

Putting  things together we arrive at 
 \[\left(\half-\veps\right)\vnorma{\nabla u(t)}^2+(1-\veps)\int_0^t\vnorma{\partial_tu(\xi)}^2\drm \xi \le  C_{\veps}+ C\int_0^t \vnorma{\nabla u}^2\drm \xi,\]
which is valid for any $t\in\I$. Fixing a suitable $\veps>0$  we conclude the proof by Gr\"onwall's lemma.
\end{proof}

\begin{lem}
	\label{lapu_bound} 
	Let the conditions of Proposition~\ref{K_bound} be satisfied. Moreover assume that $g\in\ckIlpG{1}{2}$, $h\in\ckIX{}{\hk{1}}$,   $f$ is Lipschitz continuous in all variables, and $u_0\in\hk{2}$. If $\langle u,K\rangle$ is a solution of \refe{problem} and \refe{problem.K}, then   there exists $C>0$ such that
	\begin{itemize}
	\item[(i)]
	$\D \max_{t\in\I}\vnorma{\Delta u(t)}^2+\int_0^T\vnorma{\nabla \partial_t u}^2\drm \xi\le  C$
	\item[(ii)] $\D\max_{t\in\I}\vnorma{\partial_t u(t)} \le  C.$
	\end{itemize}
\end{lem}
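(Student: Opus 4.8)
The assumptions $g\in\ckIlpG{1}{2}$ and $u_0\in\hk{2}$ signal that the natural route is to raise Proposition~\ref{gradu_bound} by one order: differentiate the variational identity \refe{P} formally in time, put $w:=\partial_t u$, and test the differentiated identity with the (time–dependent) function $\phi=w$. The parabolic term yields $\half\frac{\drm}{\drm t}\vnorma{w}^2$ and the principal part yields $\vnorma{\nabla w}^2$, so after integrating over $(0,t)$ I obtain an identity of the form
\[
\half\vnorma{w(t)}^2+\int_0^t\vnorma{\nabla w}^2\drm\xi
=\half\vnorma{w(0)}^2-\int_0^t\scal{\partial_\xi g}{w}_\Gamma\drm\xi
-R_{Kh}-\int_0^t\scal{\partial_\xi(K\ast u)}{w}\drm\xi
+\int_0^t\scal{\partial_\xi f}{w}\drm\xi,
\]
where $R_{Kh}$ collects the contribution of the kernel term $K\scal{h}{\cdot}$. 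Once the right–hand side is bounded, fixing a small $\veps>0$ to absorb the $\vnorma{\nabla w}^2$ contributions and invoking Gr\"onwall's lemma yields simultaneously $\max_{t\in\I}\vnorma{w(t)}^2\le C$ (which is (ii)) and $\int_0^T\vnorma{\nabla w}^2\drm\xi\le C$ (the second half of (i)).

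The routine terms are disposed of as before. The initial value is controlled by reading \refe{problem} at $t=0$: since $(K\ast u)(0)=0$, one has $\partial_t u(0)=\Delta u_0+f(\cdot,0,u_0,\nabla u_0)-K(0)h(0)$, so $\vnorma{w(0)}\le\vnorma{\Delta u_0}+C\le C$ thanks to $u_0\in\hk{2}$ and the boundedness of $f$ and of $K(0)$ (Proposition~\ref{K_bound}). The boundary term uses $\partial_t g\in\ckIlpG{}{2}$ together with the trace theorem and Young's inequality, giving $\abs{\scal{\partial_\xi g}{w}_\Gamma}\le C_\veps\vnorma{\partial_\xi g}_\Gamma^2+\veps\vnorma{w}_{\hk{1}}^2$, the $\veps\vnorma{\nabla w}^2$ part being absorbed on the left. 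The convolution differentiates cleanly, $\partial_t(K\ast u)=K(t)u_0+(K\ast w)(t)$, with no derivative of $K$; by \refe{eq:convolution} its $w$–pairing is bounded by $C+C\int_0^t\vnorma{w}^2$. Finally the Lipschitz continuity of $f$ gives $\abs{\partial_t f}\le C(1+\abs{w}+\abs{\nabla w})$, whence $\abs{\scal{\partial_\xi f}{w}}\le C(1+\vnorma{w}^2)+\veps\vnorma{\nabla w}^2$.

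The one genuinely delicate point — and the place where I expect the main obstacle — is $R_{Kh}$. Differentiating $K\scal{h}{\cdot}$ in time produces $K'\scal{h}{w}+K\scal{\partial_t h}{w}$, which calls for time derivatives of $K$ and of $h$ that are not directly provided by the hypotheses ($K$ is so far only bounded and $h\in\ckIX{}{\hk{1}}$ carries no time derivative). I would close this gap in one of two equivalent ways. Either I secure the missing time regularity — keeping $h\in\ckIX{}{\hk{1}}$ for the spatial factor and deriving Lipschitz continuity of $K$ in $t$ from the over-determination \refe{MP} (differentiate \refe{MP}, express $K'$ through $\partial_t f$, $m''$, $\partial_\xi g$ and $K$ itself, and bound it by a Gr\"onwall argument as in Proposition~\ref{K_bound}) — so that $\abs{R_{Kh}}\le C_\veps+C\int_0^t\vnorma{w}^2+\veps\int_0^t\vnorma{\nabla w}^2$; or, more in the spirit of the paper, I carry the whole estimate out on the Rothe approximations of Section~\ref{sec:discretization}, where $R_{Kh}$ is replaced by the difference quotient $\tau^{-1}(K_ih_i-K_{i-1}h_{i-1})$ whose summation is controlled by the corresponding discrete regularity of the data, and then pass to the limit.

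Once (ii) and $\int_0^T\vnorma{\nabla w}^2\drm\xi\le C$ are in hand, the remaining bound $\max_{t\in\I}\vnorma{\Delta u(t)}^2\le C$ in (i) is purely algebraic: rewriting \refe{problem} as $-\Delta u=f(u,\nabla u)-\partial_t u-Kh-K\ast u$ and using the boundedness of $f$, of $K$ (Proposition~\ref{K_bound}), of $\vnorma{h}$ and of $\vnorma{K\ast u}$ (see \refe{eq:convolution}) together with (ii) gives $\vnorma{\Delta u(t)}\le\vnorma{\partial_t u(t)}+C\le C$ uniformly in $t$, which completes the proof.
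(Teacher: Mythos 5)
Your energy identity itself is fine, and your ordering (get $\max_t\vnorma{\partial_t u}$ first, then read $\vnorma{\Delta u}$ off the PDE) would be legitimate --- but the obstacle you flagged at $R_{Kh}$ is not a technical wrinkle, it is fatal under the stated hypotheses, and neither of your two repairs closes it. Differentiating \refe{P} in time forces you to handle $K'\scal{h}{w}+K\scal{\partial_t h}{w}$, and the lemma gives you neither object: $h\in\ckIX{}{\hk{1}}$ carries no time derivative at all, and $m\in\conTk{1}$ is not enough to differentiate \refe{MP} (that produces $m''$ and $(\partial_t h,1)$). Your fix (a) is precisely the paper's Lemma~\ref{Kt_bound}, which indeed assumes the extra data $h\in\ckIX{1}{\lp{2}}$ and $m\in C^2(\I)$, and --- crucially --- is proved \emph{after} the present lemma because its estimate $\omega\abs{K'}\le C+C\vnorma{\partial_t\nabla u}$ is only useful once $\int_0^T\vnorma{\partial_t\nabla u}^2\le C$ and $\max_t\vnorma{\partial_t u}\le C$ are already known; invoking it here is circular unless you run the $K'$ bound inside the Gr\"onwall loop, and even then the missing hypotheses on $h$ and $m$ remain. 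Your fix (b) has the same defect in discrete clothing: the difference quotient $\delta(K_ih_i)=\delta K_i\,h_i+K_{i-1}\delta h_i$ is controlled only by Lemma~\ref{dKi_bound}, which again assumes $h\in\ckIX{1}{\lp{2}}$, $m\in\conTk{2}$, and is itself derived from the discrete analogue (Lemma~\ref{lapui_bound}) of the very estimate you are trying to prove.

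The paper's proof sidesteps the whole issue by never differentiating the equation: it tests \refe{Pvar} with $\phi=-\Delta\partial_t u$ and integrates in time. Then the parabolic term gives $\int_0^t\vnorma{\nabla\partial_t u}^2$ plus a boundary term with $\partial_t g$, the principal term gives $\frac12\vnorma{\Delta u(t)}^2$, and the kernel term becomes $-\int_0^t K\scal{h}{\Delta\partial_t u}$, which Green's formula converts into $\int_0^t K\left[\scal{\nabla h}{\nabla\partial_t u}-\scal{h}{\partial_t g}_\Gamma\right]$ --- this needs only the boundedness of $K$ (Proposition~\ref{K_bound}), $h\in\ckIX{}{\hk{1}}$ and $g\in\ckIlpG{1}{2}$, i.e.\ exactly the assumed data, and no $K'$ or $\partial_t h$ ever appears. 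The convolution term is treated the same way, and the source term is integrated by parts in time, using the Lipschitz continuity of $f$ and $\int_0^T\vnorma{\partial_t u}^2\le C$ from Proposition~\ref{gradu_bound}. This yields $(i)$ in one Gr\"onwall step, and $(ii)$ then follows from the PDE --- the reverse of your order. If you want to salvage your route, the lesson is that the test function, not the equation, should absorb the extra time derivative.
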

\begin{proof}
$(i)$ 
If we set $\phi=-\Delta \partial_t u$ in \refe{Pvar} and integrate in time we obtain 
\begin{equation}
	-\int_0^t(\partial_t u,\Delta\partial_t u)\drm \xi+\int_0^t(\Delta u, \Delta \partial_t u)\drm \xi-\int_0^tK(h,\Delta\partial_t u)\drm \xi-\int_0^t(K\ast u,\Delta\partial_t u)\drm \xi=-\int_0^t(f(u,\nabla u),\Delta\partial_t u)\drm \xi.
	\label{Plaput}
\end{equation}
The first two terms can be rewritten as 
\[
	-\int_0^t(\partial_t u,\Delta\partial_t u)\drm \xi=\int_0^t\vnorma{\nabla\partial_tu}^2\drm\xi+\int_0^t (\partial_tu,\partial_t g)_\Gamma \drm\xi,
	\qquad \int_0^t(\Delta u,\Delta \partial_t u)\drm \xi=\frac{1}{2}\vnorma{\Delta u(t)}^2 -\frac{1}{2}\vnorma{\Delta u_0}^2 .
\]
Making use of the Cauchy, Young inequalities, the trace theorem and Proposition \ref{gradu_bound} we deduce that 
\[
	\begin{array}{rlll}\D
		\abs{\int_0^t (\partial_tu,\partial_t g)_\Gamma \drm\xi}&\D \le  \int_0^t \vnorma{\partial_tu}_\Gamma \vnorma{\partial_t g }_\Gamma  \drm\xi
		\\&\D\le \veps \int_0^t \vnorma{\partial_tu}_\Gamma^2 \drm\xi + C_\veps \int_0^t  \vnorma{\partial_t g }_\Gamma^2 \drm\xi
		\\&\D\le  \veps \int_0^t \vnorma{\partial_tu}_{\hk{1}}^2 \drm\xi + C_\veps 
		\\&\D \le  \veps \int_0^t \vnorma{\nabla \partial_t u}^2 \drm\xi + C_\veps .
	\end{array}
\]

For the third term in \refe{Plaput}  we first use the Green formula 
\[-\int_0^tK(h,\Delta\partial_t u)\drm \xi=\int_0^tK\left[(\nabla h,\nabla\partial_t u)-(h,\partial_tg)_\Gamma \right]\drm \xi\]
and get by Cauchy's and Young's inequality
\[
	\abs{\int_0^tK(h,\Delta\partial_t u)\drm \xi}
	\le  C_{\veps}\int_0^t\vnorma{\nabla h}^2\drm\xi+{\veps}\int_0^t\vnorma{\nabla\partial_t u}^2\drm\xi+C\int_0^t\left(\vnorma{h}^2_\Gamma + \vnorma{\partial_tg}^2_\Gamma \right)\drm \xi 
	\le  C_{\veps} +{\veps}\int_0^t\vnorma{\nabla\partial_t u}^2\drm\xi
\]
as $K$ (see Proposition~\ref{K_bound}) is bounded and $\vnorma{h}^2_\Gamma $ is finite by the trace theorem. The last term in the left-hand side of \refe{Plaput} is rewritten as
\[-\int_0^t(K\ast u,\Delta\partial_tu)\drm \xi= \int_0^t\left[(K\ast\nabla u,\nabla\partial_tu)-(K\ast u,\partial_tg)_\Gamma \right]\drm \xi,\]
which gives
\[\abs{\int_0^t(K\ast u,\Delta\partial_tu)\drm \xi}\le  \int_0^t\vnorma{K\ast \nabla u}\vnorma{\nabla\partial_tu}\drm \xi +  \int_0^t\vnorma{K\ast u}_\Gamma \vnorma{\partial_tg}_\Gamma \drm \xi\]
\[
	\le  C_{\veps}\int_0^t\vnorma{K\ast \nabla u}^2\drm \xi +{\veps}\int_0^t\vnorma{\nabla\partial_tu}^2\drm \xi
	+  C\int_0^t\vnorma{K\ast u}_\Gamma ^2\drm\xi+C\int_0^t\vnorma{\partial_tg}^2_\Gamma \drm \xi
 \]
\[\le  C_{\veps} +{\veps}\int_0^t\vnorma{\nabla\partial_tu}\drm \xi
\]
as $\D\vnorma{(K\ast \nabla u)(t)}^2\le  C\int_0^t\vnorma{\nabla u}^2\di s$ and $\D\vnorma{(K\ast u)(t)}_\Gamma ^2\le  C\int_0^t \vnorma{u}_\Gamma ^2\di s$, like in \refe{eq:convolution}.  The right-hand side of \refe{Plaput} is rewritten by integrating by parts as
\[-\int_0^t(f(u,\nabla u),\Delta\partial_t u)\drm \xi=\int_0^t(\partial_tf(u,\nabla u),\Delta u)\drm \xi+(f(u(0),\nabla u(0)),\Delta u(0))-(f(u(t),\nabla u(t)),\Delta u(t) )\]
so
\[
	\abs{\int_0^t(f(u,\nabla u),\Delta\partial_tu)\drm \xi}
	\le  \veps\int_0^t\vnorma{\partial_tf(u,\nabla u)}^2\drm \xi+C_{\veps}\int_0^t\vnorma{\Delta u}^2\drm \xi+C_{\veps}+\veps\vnorma{\Delta u(t)}^2
\]
\[\le \veps\int_0^t\vnorma{\partial_t\nabla u}^2\drm \xi+C_{\veps}\int_0^t\vnorma{\Delta u}^2\drm \xi+C_{\veps}+\veps\vnorma{\Delta u(t)}^2,\]
as $u_0\in\hk{2}$, $\partial_tf(u,\nabla u)=\nabla f(u,\nabla u)\cdot\langle\partial_t u,\partial_t\nabla u\rangle$, $f$ is Lipschitz in all variables and $\D\int_0^T\vnorma{\partial_t u}^2\di s$ is  bounded by Proposition~\ref{gradu_bound}.

Putting all things together we obtain
\[(1-\veps)\int_0^t\vnorma{\nabla\partial_tu(\xi)}^2\drm\xi+\left(\frac{1}{2}-\veps\right)\vnorma{\Delta u(t)}^2 \le  C_{\veps} +C_{\veps}\int_0^t\vnorma{\Delta u}^2\drm \xi,\]
which is valid for any $t\in\I$. Fixing a sufficiently small $\veps>0$ and involving Gr\"onwall's argument, we obtain the desired result. 

$(ii)$
The assertion follows readily from \refe{problem} and the already obtained stability results, i.e.
\[\vnorma{\partial_t u} = \vnorma{ \Delta u -Kh -K\ast u + f(u ,\nabla u )}\le C\left(\max_{t\in \I} |K(t)|\right) \left( 1 + \vnorma{\Delta u} + \vnorma{u}\right)\le C.\]
\end{proof}

\begin{lem}
	\label{Kt_bound} 
	Let the conditions of Proposition~\ref{K_bound} be satisfied. Moreover assume that $g\in\ckIlpG{1}{2}$, $h\in\ckIX{1}{\lp{2}}\cap \ckIX{}{\hk{1}}$,   $m\in C^2(\I)$, $f$ is Lipschitz continuous in all variables, and $u_0\in\hk{2}$. 
	If $\langle u,K\rangle$ is a solution of \refe{problem} and \refe{problem.K}, then   there exists $C>0$ such that
	\[\int_0^T \abs{K'(s)}^2 \di s\le C.\]
\end{lem}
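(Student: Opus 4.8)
The plan is to solve the measurement relation \refe{MP} algebraically for $K$, differentiate once in time, and then control the resulting expression in $\Leb^2(\Iopen)$ using the a priori bounds already at our disposal. Since $\abs{(h(t),1)}\ge\omega>0$, equation \refe{MP} gives
\[K(t)=\frac{1}{(h(t),1)}\left[(f(u,\nabla u),1)-(K\ast m)(t)-m'(t)-(g,1)_\Gamma\right].\]
Writing $a(t)=(h(t),1)$ and letting $F(t)$ denote the bracketed right-hand side, so that $K(t)a(t)=F(t)$, I would differentiate to obtain
\[K'(t)=\frac{F'(t)-K(t)a'(t)}{a(t)}.\]
Because $h\in\ckIX{1}{\lp{2}}$, the quantity $a'(t)=(\partial_t h(t),1)$ is continuous and hence bounded, and $K$ is bounded by Proposition~\ref{K_bound}; together with $\abs{a(t)}\ge\omega$ this makes the term $K(t)a'(t)/a(t)$ bounded on $\I$.

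The crux is the differentiation of $F$, and in particular of the convolution. Writing $(K\ast m)(t)=\int_0^t K(s)m(t-s)\di s$ and differentiating yields
\[\frac{\drm}{\drm t}(K\ast m)(t)=K(t)m(0)+(K\ast m')(t),\]
which is the key move: it retains only the bounded quantities $K$ and $m'$ and avoids any circular dependence on $K'$. The remaining pieces of $F'$ are differentiated directly, $\frac{\drm}{\drm t}(g,1)_\Gamma=(\partial_t g,1)_\Gamma$ (finite since $g\in\ckIlpG{1}{2}$), $\frac{\drm}{\drm t}m'=m''$ (finite since $m\in C^2(\I)$), and $\frac{\drm}{\drm t}(f(u,\nabla u),1)=(\partial_t f(u,\nabla u),1)$ with $\partial_t f(u,\nabla u)=\nabla f(u,\nabla u)\cdot\langle\partial_t u,\partial_t\nabla u\rangle$, so that
\[F'(t)=(\partial_t f(u,\nabla u),1)-K(t)m(0)-(K\ast m')(t)-m''(t)-(\partial_t g,1)_\Gamma.\]

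Next I would estimate $\abs{K'(t)}$ termwise. Every term except the $f$-contribution is bounded on $\I$: $K$, $m(0)$ and $m''$ are bounded, $(K\ast m')(t)$ is bounded because $\max_\I\abs{K}$ and $\max_\I\abs{m'}$ are finite, and $\abs{(\partial_t g,1)_\Gamma}\le\vnorma{\partial_t g}_\Gamma\,\vnorma{1}_\Gamma$ is bounded by Cauchy's inequality on $\Gamma$. For the $f$-term, Lipschitz continuity of $f$ gives $\abs{\partial_t f(u,\nabla u)}\le C\left(1+\abs{\partial_t u}+\abs{\partial_t\nabla u}\right)$, whence $\abs{(\partial_t f(u,\nabla u),1)}\le C\left(1+\vnorma{\partial_t u}+\vnorma{\nabla\partial_t u}\right)$ by Cauchy's inequality. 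Collecting everything produces the pointwise bound
\[\abs{K'(t)}\le C\left(1+\vnorma{\partial_t u(t)}+\vnorma{\nabla\partial_t u(t)}\right),\]
and squaring and integrating over $\I$ finishes the proof via $\int_0^T\vnorma{\partial_t u}^2\,\drm\xi\le C$ (Proposition~\ref{gradu_bound}) and $\int_0^T\vnorma{\nabla\partial_t u}^2\,\drm\xi\le C$ (Proposition~\ref{lapu_bound}$(i)$).

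The main obstacle is precisely the convolution differentiation: differentiating $(K\ast m)(t)=\int_0^t K(t-s)m(s)\di s$ in the naive way makes $K'$ appear on the right-hand side and forces a Gr\"onwall argument, whereas the symmetric form above sidesteps this entirely and is what makes the clean $\Leb^2$ estimate possible. A secondary point is bookkeeping: each hypothesis in the statement, namely $m\in C^2(\I)$, $h\in\ckIX{1}{\lp{2}}$, $g\in\ckIlpG{1}{2}$ and the Lipschitz continuity of $f$ (together with $h\in\ckIX{}{\hk{1}}$ and $u_0\in\hk{2}$, which enter through Proposition~\ref{lapu_bound}), is used exactly once to render the corresponding term of $F'$ either bounded or square-integrable.
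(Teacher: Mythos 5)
Your proposal is correct and follows essentially the same route as the paper: the paper likewise differentiates the measurement identity \refe{MP} in time, uses precisely the convolution identity $\frac{\drm}{\drm t}(K\ast m)=K(t)m(0)+(K\ast m')(t)$ to avoid any circular appearance of $K'$, and closes by squaring the resulting pointwise bound $\omega\abs{K'(t)}\le C+C\vnorma{\partial_t\nabla u}$ and integrating via Proposition~\ref{lapu_bound}. The only cosmetic differences are that you solve for $K$ and differentiate the quotient while the paper differentiates the product relation $K(t)(h(t),1)=\dots$ directly, and that you control $\vnorma{\partial_t u}$ through its $\Leb^2$ bound (Proposition~\ref{gradu_bound}) where the paper absorbs it into the constant using the $\Leb^\infty$ bound of Proposition~\ref{lapu_bound}$(ii)$.
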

\begin{proof}
We take the time derivative of \refe{MP} and it follows that for any time $t\in\I$ it holds
\[m''+(\partial_tg,1)_{\Gamma}+K'(h,1)+K(\partial_th,1)+Km(0)+K\ast m'=(\nabla f(u,\nabla u)\cdot\langle\partial_t u,\partial_t\nabla u\rangle,1).\tag{MP'}\label{MP'}\]
From this we infer
\[\abs{(h,1)}\abs{K'(t)}\le  \abs{(\nabla f(u,\nabla u)\cdot\langle\partial_t u,\partial_t\nabla u\rangle,1)}+\abs{K\ast m'}+C\]
as $K$ is bounded, $\partial_t h\in\ckIX{}{\lp{2}}$, $\partial_t g\in\ckIX{}{\lpG{2}}$ and $m\in \Cont^2(\I)$.  Since $f$ is Lipschitz continuous in all variables and $\partial_t u$ is $\lp{2}$-bounded we obtain
 \[ \omega\abs{K'(t)}\le \abs{(h,1)}\abs{K'}\le    C+C\vnorma{\partial_t\nabla u}.\]  
Taking square and integrating in time we arrive at 
\[ \int_0^T\abs{K'(\xi)}^2\drm\xi\le  C+C\int_0^T\vnorma{\partial_t\nabla u(\xi)}^2\drm\xi\le  C.\]
\end{proof}


\paragraph {Uniqueness} Now, we are in a position to state unicity of solution.  Suppose $\langle u_1,K_1\rangle$ and $\langle u_2,K_2\rangle$ solve \refe{P}-\refe{MP}, then
by subtracting the corresponding variational formulations from each other we obtain
\[(\partial_t (u_1-u_2),\phi)+(\nabla (u_1-u_2),\nabla\phi)+(K_1(t)-K_2(t))(h,\phi)+(K_1\ast u_1-K_2\ast u_2,\phi)=(f(u_1,\nabla u_1)-f(u_2,\nabla u_2),\phi),\]
\[(K_1(t)-K_2(t))(h,1)+(K_1-K_2)\ast m=(f(u_1,\nabla u_1)-f(u_2,\nabla u_2),1).\]
This we rewrite using $e_K(t)=K_1(t)-K_2(t)$ and $e_u(\vector x,t)=u_1(\vector x,t)-u_2(\vector x,t)$
\begin{numcases}{}
	(\partial_t e_u,\phi)+(\nabla e_u,\nabla\phi)+e_K(h,\phi)+(K_1\ast e_u,\phi)+(e_K\ast u_2,\phi)=(f(u_1,\nabla u_1)-f(u_2,\nabla u_2),\phi) \label{u1u2}\\
	e_K(h,1)+e_K\ast m=(f(u_1,\nabla u_1)-f(u_2,\nabla u_2),1). 
	\label{Ku1u2}
\end{numcases}

\begin{stel}
	\label{thm:uniqueness}
	Assume that   $h\in\ckIX{}{\lp{2}}$, $ \min_{t\in\I} | (h(t),1)| \ge\omega>0$ and $m\in\conTk{}$. 
	The function  $f$ is supposed to be  Lipschitz continuous in all variables. Then the problem \refe{P}-\refe{MP} has at most one solution $\langle u,K\rangle\in \Leb^2\left(\Iopen,\hk{1}\right)\times \Leb^2(0,T)$  with $\partial_t u\in \Leb^2\left(\Iopen,\left(\hk{1}\right)^*\right)$. 
\end{stel}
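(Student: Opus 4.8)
The plan is to prove uniqueness by a standard energy/Grönwall argument applied to the difference of two hypothetical solutions. I already have the subtracted system \refe{u1u2}--\refe{Ku1u2} for the errors $e_u = u_1 - u_2$ and $e_K = K_1 - K_2$. The goal is to show $e_u \equiv 0$ and $e_K \equiv 0$ on $\I$. The two natural test functions are $\phi = e_u$ in \refe{u1u2} (to control $e_u$) and the algebraic identity \refe{Ku1u2} (to control $e_K$ pointwise in terms of $e_u$).

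First I would estimate $e_K$ from \refe{Ku1u2}. Using $\min_t|(h(t),1)| \ge \omega > 0$ and the Lipschitz continuity of $f$, I bound
\[
\omega\,\abs{e_K(t)} \le \abs{(h(t),1)}\,\abs{e_K(t)} \le C\left(\vnorma{e_u(t)} + \vnorma{\nabla e_u(t)}\right) + \abs{(e_K\ast m)(t)},
\]
where the convolution term satisfies $\abs{(e_K\ast m)(t)} \le C\int_0^t \abs{e_K(s)}\ds$ since $m$ is continuous hence bounded. A Grönwall argument then yields a pointwise bound of the form $\abs{e_K(t)}^2 \le C\int_0^t \vnorma{e_u(\xi)}^2_{\hk{1}}\,\drm\xi$ after squaring and reabsorbing the self-referential convolution term.

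Next I would set $\phi = e_u$ in \refe{u1u2} and integrate over $(0,t)$. The first term gives $\half\vnorma{e_u(t)}^2$ (the initial data cancel since both solutions share $u_0$), the second gives $\int_0^t\vnorma{\nabla e_u}^2\,\drm\xi$, and the right-hand side is controlled by Lipschitz continuity of $f$ as $C\int_0^t\vnorma{e_u}^2_{\hk{1}}\,\drm\xi$. The two convolution terms $(K_1\ast e_u, e_u)$ and $(e_K\ast u_2, e_u)$ are handled as in \refe{eq:convolution}: the first uses boundedness of $K_1$ (Proposition~\ref{K_bound}), and the second uses the $\lp{2}$-boundedness of $u_2$ from Proposition~\ref{u_bound}$(i)$, each bounded by $C\int_0^t\vnorma{e_u}^2\,\drm\xi$ via Young's inequality. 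The term $e_K(h,e_u)$ is estimated by $C_\veps\abs{e_K}^2\vnorma{h}^2 + \veps\vnorma{e_u}^2$, and here I insert the pointwise bound on $\abs{e_K}^2$ obtained in the previous step.

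\textbf{The main obstacle} is the coupling: the bound on $e_K$ involves $\vnorma{\nabla e_u}$, so after substituting it into the $e_u$-estimate one obtains a $\veps\int_0^t\vnorma{\nabla e_u}^2\,\drm\xi$ contribution that competes with the good $\int_0^t\vnorma{\nabla e_u}^2\,\drm\xi$ term on the left. The key is to choose $\veps$ small enough so this gradient term is absorbed into the left-hand side, leaving
\[
\vnorma{e_u(t)}^2 + \int_0^t\vnorma{\nabla e_u(\xi)}^2\,\drm\xi \le C\int_0^t\vnorma{e_u(\xi)}^2_{\hk{1}}\,\drm\xi.
\]
Grönwall's lemma then forces $e_u \equiv 0$, and the pointwise bound immediately gives $e_K \equiv 0$, establishing uniqueness. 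I should double-check that the regularity assumed in the theorem ($u\in\LpIHk{1}$, $\partial_t u\in\Leb^2(\Iopen,(\hk{1})^*)$) suffices to justify $\phi = e_u$ as a test function and the integration-by-parts yielding $\half\vnorma{e_u(t)}^2$; this is the usual Lions--Magenes duality pairing argument and is legitimate in this space.
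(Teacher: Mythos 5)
You follow the same route as the paper: extract a bound for $e_K$ from \refe{Ku1u2} via Gr\"onwall, test \refe{u1u2} with $\phi=e_u$, absorb the gradient contributions, and conclude with Gr\"onwall (your Lions--Magenes remark about the legitimacy of testing with $e_u$ is also fine). But the execution has a genuine gap, located exactly at the delicate absorption step. The pointwise bound you claim, $\abs{e_K(t)}^2\le C\int_0^t\vnorma{e_u(\xi)}^2_{\hk{1}}\drm\xi$, is false. From \refe{Ku1u2} and the Lipschitz continuity of $f$ one gets
\[
\omega\abs{e_K(t)}\le C\vnorma{e_u(t)}_{\hk{1}}+C\int_0^t\abs{e_K(s)}\drm s,
\]
and Gr\"onwall can only remove the self-referential convolution term; it cannot convert the instantaneous term into an integral. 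The correct conclusion is the paper's \refe{eq:pp},
\[
\abs{e_K(t)}\le C\vnorma{e_u(t)}_{\hk{1}}+C\int_0^t\vnorma{e_u(s)}_{\hk{1}}\drm s,
\]
and the term $C\vnorma{e_u(t)}_{\hk{1}}$ must stay: if $e_u$ vanishes on $(0,t-\delta)$ and is large near $t$, your integral bound fails while \refe{eq:pp} does not.

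This matters because of how you then use the bound. In the coupling term you split $\abs{e_K}\,\abs{(h,e_u)}\le C_\veps\abs{e_K}^2\vnorma{h}^2+\veps\vnorma{e_u}^2$, i.e.\ the \emph{large} constant sits on $\abs{e_K}^2$. Inserting the corrected bound, the right-hand side now contains $C_\veps\int_0^t\vnorma{\nabla e_u}^2\drm\xi$, which the single term $\int_0^t\vnorma{\nabla e_u}^2\drm\xi$ on the left cannot absorb, however small $\veps$ is; your own narrative (``one obtains a $\veps\int_0^t\vnorma{\nabla e_u}^2$ contribution'') is inconsistent with the splitting you wrote down. The same defect appears in $(e_K\ast u_2,e_u)$, which cannot be bounded by $C\int_0^t\vnorma{e_u}^2\drm\xi$ alone since it carries $e_K$ and hence $\nabla e_u$ (also, Proposition~\ref{u_bound}(i) assumes bounded $f$, not available under the theorem's hypotheses; the $\Leb^\infty$-$\lp{2}$ bound on $u_2$ should instead come from the regularity of the solution class). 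Finally, your concluding inequality, with $C\int_0^t\vnorma{e_u}^2_{\hk{1}}\drm\xi$ on the right, is not of Gr\"onwall type: the gradient part of the $\hk{1}$-norm on the right is not dominated by the left-hand side. The repair is the paper's arrangement: keep \refe{eq:pp} as is and put the \emph{small} weight on $e_K$ throughout, e.g.\ $\abs{e_K}\vnorma{h}\vnorma{e_u}\le\veps\abs{e_K}^2+C_\veps\vnorma{e_u}^2$ (using $\max_{t\in\I}\vnorma{h(t)}\le C$) and $\vnorma{e_K\ast u_2}\vnorma{e_u}\le\veps\int_0^t\abs{e_K}^2\drm\xi+C_\veps\vnorma{e_u}^2$; then \refe{eq:pp} turns $\veps\int_0^t\abs{e_K}^2\drm\xi$ into $\veps C\int_0^t\vnorma{e_u}^2_{\hk{1}}\drm\xi$, whose gradient part is absorbed on the left for small $\veps$, leaving
\[
\vnorma{e_u(t)}^2+(1-\veps)\int_0^t\vnorma{\nabla e_u}^2\drm\xi\le C_\veps\int_0^t\vnorma{e_u}^2\drm\xi,
\]
to which Gr\"onwall applies, giving $e_u\equiv0$ and then $e_K\equiv0$ from \refe{eq:pp}.
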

\begin{proof}
The Lipschitz continuity of $f$, Gr\"onwall's lemma  and \refe{Ku1u2} implies 
\begin{equation}
	\label{eq:pp}
	\abs{e_K(t)}\le  C\vnorma{e_u(t)}_{\hk{1}}+ C\int_0^t \vnorma{e_u}_{\hk{1}}\drm\xi.
\end{equation}

We put $\phi=e_u$ in \refe{u1u2} and integrate in time
\[
	\begin{array}{lll}\D
		\frac{1}{2}\vnorma{e_u(t)}^2+\int_0^t\vnorma{\nabla e_u}^2\drm\xi+\int_0^te_K(h,e_u)\drm\xi+\int_0^t(K_1\ast e_u,e_u)\drm\xi+\int_0^t(e_K\ast u_2,e_u)\drm\xi
		\\\D =\int_0^t(f(u_1,\nabla u_1)-f(u_2,\nabla u_2),e_u)\drm\xi.
	\end{array}
	\]
Using Cauchy's inequality, we obtain successively the bounds
\[\int_0^t\vnorma{f(u_1,\nabla u_1)-f(u_2,\nabla u_2)}\vnorma{e_u}\drm\xi\le  {\veps}\int_0^t\vnorma{\nabla e_u}^2\drm\xi+C_{\veps}\int_0^t\vnorma{e_u}^2\drm\xi,\]
as $f$ is Lipschitz,
\[\int_0^t\vnorma{e_K\ast u_2}\vnorma{e_u}\drm\xi\le  \veps\int_0^t e_K^2\drm\xi+C_{\veps}\int_0^t\vnorma{e_u}^2\drm\xi\] 
as $u_2\in\ckIX{}{\lp{2}}$, which follows from $\partial_t u_2\in \Leb^2\left(\Iopen,\lp{2}\right)$,
\[\int_0^t\vnorma{K_1\ast e_u}\vnorma{e_u}\drm\xi\le  C\int_0^t\vnorma{e_u}^2\drm\xi,\] 
as $K_1\in \Leb^2(0,T)$, and using $h\in\ckIX{}{\lp{2}}$
\[\int_0^t\abs{e_K}\vnorma{h}\vnorma{e_u}\drm\xi\le  \veps\int_0^t\abs{e_K}^2 \drm\xi+C_{\veps}\int_0^t \vnorma{e_u}^2\drm\xi\le \veps\int_0^t\vnorma{e_u}_{\hk{1}}^2\drm\xi+C_{\veps}\int_0^t \vnorma{e_u}^2\drm\xi.\]
From these estimates we obtain
\[\vnorma{e_u(t)}^2+(1-\veps)\int_0^t\vnorma{\nabla e_u}^2\drm\xi\le  C_{\veps}\int_0^t\vnorma{e_u}^2\drm\xi,\]
and conclude that $\D\max_{t\in\I}\vnorma{e_u(t)}^2+ \int_0^T\vnorma{\nabla e_u}^2\drm\xi=0$ by Gr\"onwall's lemma when fixing a suitable $\veps>0$.  So $u$ is unique in $\ckIX{}{\lp{2}}\cap \Leb^2\left(\Iopen,\hk{1}\right)$.
The uniqueness of $K$ in $\Leb^2(0,T)$ follows from \refe{eq:pp}. 
\end{proof}


\section{Time discretization, existence of a solution}
\label{sec:discretization}
Rothe's method \cite{kacur,rektorys} represents a constructive method suitable for solving
evolution problems. Using a simple discretization in time, a
time-dependent problem is approximated by a sequence of elliptic
problems which have to be solved successively with increasing time
step. This standard technique is in our case complicated by the 
unknown convolution kernel  $K$. There exists a simple way  to overcome  this difficulty.

For ease of explanation we consider an equidistant time-partitioning of the time frame $\I$ with a step $\tau=T/n,$ for any $n\in\NN$. We use the notation $t_i=i\tau$ and for any function $z$ we write
\[z_i=z(t_i),\qquad \delta z_i = \frac {z_i-z_{i-1}}{\tau}.\]
 
  We  will consider a decoupled system with unknowns $\langle u_i,K_i\rangle$ for $i=1,\dots,n$.
At time $t_i$ we infer from \refe{Pvar} the backward Euler scheme
 \begin{equation}
	\scal{\delta u_i}{\phi}-\scal{\Delta u_i}{\phi}+K_i\scal{h_i}{\phi}+\scal{\sum_{k=1}^i K_k u_{i-k}\tau}{\phi}=\scal{f_{i-1}}{\phi}.
	\label{Pivar}
\end{equation}
where $f_i=f(u_{i},\nabla u_{i})$.
Like \refe{P} and \refe{MP} one obtains for $\phi\in\hk{1}$ that
\[
	\scal{\delta u_i}{\phi}+\scal{\nabla u_i}{\nabla\phi}+ \scal{g_i}{\phi}_\Gamma +K_i\scal{h_i}{\phi}+\scal{\sum_{k=1}^i K_k u_{i-k}\tau}{\phi}=\scal{f_{i-1}}{\phi}
	\tag{DP$i$}\label{DPi}
\]
and
\[m'_i+(g_i,1)_{\Gamma}+K_i(h_i,1)+\sum_{k=1}^i K_k m_{i-k}\tau=(f_{i-1},1).\tag{DMP$i$}\label{DMPi}\]
Note that for a given $i\in\{1,\dots,n\}$ we solve first \refe{DMPi} and then \refe{DPi}. Further we increase $i$ to $i+1$.
 
\begin{lem}
	\label{Ki_bound} 
	Let $f$ be bounded, i.e. $|f|\le C$. Moreover assume that  $g\in\ckIlpG{}{2}$, $h\in\ckIlp{}{2}$, $ \min_{t\in\I} | (h(t),1)| \ge\omega>0$, $u_0\in\hk{1}$ and $m\in\conTk{1}$. 
	Then there exist $C>0$ and $\tau_0>0$ such that for any $\tau<\tau_0$ and each $i\in\{1,\dots,n\}$ we have
	\begin{itemize}
	 \item [(i)] there exist $K_i\in\RR$ and $u_i\in\hk{1}$ obeying \refe{DMPi} and \refe{DPi}
	 \item[(ii)] $\D\max_{1\le i\le n}|K_i|\le C$.
	\end{itemize}
\end{lem}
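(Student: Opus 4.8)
The plan is to argue by induction on $i$, exploiting the decoupled structure of the scheme: at each time level one first solves the scalar equation \refe{DMPi} for $K_i$ and then the elliptic problem \refe{DPi} for $u_i$, in both of which the nonlinearity enters only through the already known quantity $f_{i-1}=f(u_{i-1},\nabla u_{i-1})$. The base case uses only $u_0\in\hk{1}$, while the inductive hypothesis supplies $u_0,\dots,u_{i-1}\in\hk{1}$ together with $K_1,\dots,K_{i-1}\in\RR$. The crucial observation for the scalar step is that the discrete convolution $\sum_{k=1}^iK_km_{i-k}\tau$ contains the self-interaction summand $k=i$, namely $K_im_0\tau$ with $m_0=m(0)=(u_0,1)$. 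Splitting it off, \refe{DMPi} becomes a single linear equation for $K_i$ whose coefficient is $(h_i,1)+m_0\tau$, with everything else on the right-hand side already known.

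First I would fix $\tau_0$ so small that $|m_0|\tau_0\le\tfrac{\omega}{2}$; then for every $\tau<\tau_0$ the hypothesis $|(h_i,1)|\ge\omega$ yields $|(h_i,1)+m_0\tau|\ge\tfrac{\omega}{2}>0$, so the coefficient never degenerates and $K_i$ is uniquely and explicitly determined. With $K_i$ in hand, \refe{DPi} reads $a(u_i,\phi):=\tau^{-1}(u_i,\phi)+(\nabla u_i,\nabla\phi)=\ell(\phi)$, where the right-hand side collects $\tau^{-1}(u_{i-1},\phi)$, the boundary term $-(g_i,\phi)_\Gamma$, the term $-K_i(h_i,\phi)$, the now fully known convolution $-(\sum_{k=1}^iK_ku_{i-k}\tau,\phi)$ and $(f_{i-1},\phi)$. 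The form $a$ is bounded and coercive on $\hk{1}$, and $\ell$ is a bounded linear functional there (using $u_{i-1}\in\lp{2}$, the trace theorem for $g_i\in\lpG{2}$, the membership $h_i\in\lp{2}$, the boundedness of $f$, and $u_{i-k}\in\lp{2}$). Hence Lax--Milgram provides a unique $u_i\in\hk{1}$, which closes the induction and proves $(i)$.

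For $(ii)$ I would return to the scalar equation and estimate. From $|(h_i,1)+m_0\tau|\ge\tfrac{\omega}{2}$ one gets
\[
\frac{\omega}{2}|K_i|\le\Big|(f_{i-1},1)-m'_i-(g_i,1)_\Gamma-\sum_{k=1}^{i-1}K_km_{i-k}\tau\Big|.
\]
The first three terms are bounded by a constant independent of $i$ and $\tau$ (boundedness of $f$ with the finiteness of $|\domain|$, together with $m\in\conTk{1}$ and $g\in\ckIlpG{}{2}$), while the discrete convolution is controlled by $\max_{t\in\I}|m(t)|\sum_{k=1}^{i-1}|K_k|\tau$. This yields the discrete inequality $|K_i|\le C+C\sum_{k=1}^{i-1}|K_k|\tau$, to which a discrete Gr\"onwall argument applies, giving $|K_i|\le Ce^{CT}=C$ uniformly in $i$ and in $\tau<\tau_0$. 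This mirrors exactly the continuous estimate of Proposition~\ref{K_bound}, with the integral Gr\"onwall replaced by its summed counterpart.

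The step I expect to be the genuine obstacle --- and the reason $\tau_0$ appears in the statement --- is the non-degeneracy of the coefficient $(h_i,1)+m_0\tau$: the self-interaction term $K_im_0\tau$ perturbs $(h_i,1)$, which is bounded away from zero only by $\omega$, so both solvability of \refe{DMPi} and the uniform lower bound $\tfrac{\omega}{2}$ are available only after restricting to sufficiently small time steps. Once this non-degeneracy is secured, the Lax--Milgram construction of $u_i$ and the discrete Gr\"onwall bound on $K_i$ are routine.
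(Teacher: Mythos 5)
Your proposal is correct and is essentially the paper's own proof: the same choice of $\tau_0$ making the coefficient of $K_i$ non-degenerate (the paper takes $\tau_0=\min\left\{1,\omega/(2\abs{m_0})\right\}$), the same recursive scheme of solving \refe{DMPi} for $K_i$ and then \refe{DPi} for $u_i$ via Lax--Milgram, and the same discrete Gr\"onwall argument for part $(ii)$. The only discrepancy is the sign of the split-off self-interaction term: your coefficient $(h_i,1)+m_0\tau$ is the one that actually follows from \refe{DMPi}, whereas the paper's equation \refe{eq:ki} writes $(h_i,1)-m_0\tau$ --- a typo that is immaterial, since both arguments use only the lower bound $\abs{(h_i,1)\pm m_0\tau}\ge\omega-\abs{m_0}\tau>0$.
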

\begin{proof}
$(i)$ Set $\D \tau_0=\min\left\{ 1, \frac{\omega}{2\abs{m_0}}\right\}$. Then for any $\tau<\tau_0$ we may write by triangle inequality that

\[ 
0< \omega - \abs{m_0}\tau_0\le \omega - \abs{m_0}\tau\le \abs{ \scal{h_i}{1}} - \abs{m_0}\tau \le \abs{\scal{h_i}{1} - m_0\tau  }
\]
We apply the following recursive deduction for $i=1,\dots,n$. 
\begin{enumerate}[Step 1:]
	\item Let $u_{i-1}\in\hk{1}$ be given. Then \refe{DMPi} implies the existence of $K_i\in\RR$ such that
	\begin{equation}\label{eq:ki}
		K_i\left[\scal{h_i}{1} - m_0\tau \right] = \scal{f_{i-1}}{1} - m'_i - \scal{g_i}{1}_\Gamma - \sum_{k=1}^{i-1} K_k m_{i-k}\tau.
	\end{equation}
	\item The existence of $u_i\in\hk{1}$ follows from \refe{DPi} by the Lax-Milgram lemma. 
\end{enumerate}

$(ii)$ The relation \refe{eq:ki} yields 
\[ 
	\abs{K_i} \le C\left( 1+ \sum_{k=1}^{i-1} \abs{K_{k}}\tau\right),
\]
which is valid for any $i=1,\dots,n$. An application of the discrete Gr\"{o}nwall lemma gives the uniform bound of $\abs{K_i} $.
\end{proof}

\begin{lem}
\label{ui_bound} 
	Let the conditions of Proposition~\ref{Ki_bound} be satisfied.  Then there exists $C>0$ such that for any $\tau<\tau_0$   
	\[\max_{1\le j\le n}\vnorma{u_j}^2+\sum_{i=1}^n\vnorma{\nabla u_i}^2 \tau +\sum_{i=1}^n\vnorma{u_i-u_{i-1}}^2\le  C.\]
\end{lem}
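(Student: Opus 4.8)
The plan is to reproduce the energy estimate of Proposition~\ref{u_bound}$(i)$ at the discrete level. First I would test \refe{DPi} with $\phi = u_i$, multiply by the time step $\tau$, and sum the resulting identities over $i=1,\dots,j$ for an arbitrary $j\in\{1,\dots,n\}$. This yields a discrete energy balance whose terms are then estimated one by one, exactly mirroring the handling of \refe{Pu}.

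The cornerstone is the discrete counterpart of the chain rule $\int_0^t(\partial_t u,u)=\frac12\vnorma{u(t)}^2-\frac12\vnorma{u_0}^2$. Using the elementary identity $\scal{a-b}{a}=\frac12\left(\vnorma{a}^2-\vnorma{b}^2+\vnorma{a-b}^2\right)$ with $a=u_i$, $b=u_{i-1}$, one gets
\[\scal{\delta u_i}{u_i}\tau=\scal{u_i-u_{i-1}}{u_i}=\frac12\left(\vnorma{u_i}^2-\vnorma{u_{i-1}}^2+\vnorma{u_i-u_{i-1}}^2\right),\]
which telescopes under summation to $\frac12\vnorma{u_j}^2-\frac12\vnorma{u_0}^2+\frac12\sum_{i=1}^j\vnorma{u_i-u_{i-1}}^2$. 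This is precisely what supplies, with the correct sign on the left, both the leading norm $\vnorma{u_j}^2$ and the jump sum $\sum_i\vnorma{u_i-u_{i-1}}^2$ appearing in the assertion, while the diffusion term contributes $\sum_{i=1}^j\vnorma{\nabla u_i}^2\tau$ directly. The datum $\vnorma{u_0}$ is finite since $u_0\in\hk{1}$.

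The remaining terms are transcriptions of the continuous proof. The boundary term $\scal{g_i}{u_i}_\Gamma$ is controlled by the trace theorem and Young's inequality, absorbing a small multiple $\veps\vnorma{u_i}^2_{\hk{1}}$; the reaction term $K_i\scal{h_i}{u_i}$ and the right-hand side $\scal{f_{i-1}}{u_i}$ are bounded via the uniform bound $\max_i\abs{K_i}\le C$ from Proposition~\ref{Ki_bound}$(ii)$ and the boundedness of $f$, each producing $C+C\sum_i\vnorma{u_i}^2\tau$ after Young's inequality.

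The only genuinely new ingredient is the discrete convolution term $\scal{\sum_{k=1}^i K_k u_{i-k}\tau}{u_i}$, which replaces estimate \refe{eq:convolution}. Using $\max_k\abs{K_k}\le C$ and the discrete Cauchy-Schwarz inequality I would bound
\[\vnorma{\sum_{k=1}^i K_k u_{i-k}\tau}^2\le C\left(\sum_{k=1}^i\vnorma{u_{i-k}}\tau\right)^2\le C\,i\tau\sum_{k=1}^i\vnorma{u_{i-k}}^2\tau\le C\sum_{l=0}^{i-1}\vnorma{u_l}^2\tau,\]
the last step using $i\tau\le T$. After Young's inequality the double sum $\sum_{i=1}^j\left(\sum_{l=0}^{i-1}\vnorma{u_l}^2\tau\right)\tau\le T\sum_{l=0}^{j-1}\vnorma{u_l}^2\tau$ again produces a Gr\"onwall-compatible term. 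Collecting everything and fixing $\veps$ small enough to absorb the $\veps\sum\vnorma{\nabla u_i}^2\tau$ contributions on the left leaves, for every $j$,
\[\vnorma{u_j}^2+\sum_{i=1}^j\vnorma{u_i-u_{i-1}}^2+\sum_{i=1}^j\vnorma{\nabla u_i}^2\tau\le C+C\sum_{i=1}^j\vnorma{u_i}^2\tau,\]
and the discrete Gr\"onwall lemma closes the argument. I expect the main obstacle to be the bookkeeping for the discrete convolution, in particular verifying the rearrangement of the double sum and that the factor $i\tau$ stays bounded by $T$ uniformly in $\tau$; the rest is routine.
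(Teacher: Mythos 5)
Your proposal is correct and follows essentially the same route as the paper: testing \refe{DPi} with $\phi=u_i\tau$, telescoping via $\scal{a-b}{a}=\frac12\left(\vnorma{a}^2-\vnorma{b}^2+\vnorma{a-b}^2\right)$, handling the boundary, kernel and source terms exactly as in the continuous case, and closing with the discrete Gr\"onwall lemma. The only cosmetic difference is in the discrete convolution term, where you first bound $\vnorma{\sum_{k=1}^i K_k u_{i-k}\tau}$ by Cauchy--Schwarz (mimicking \refe{eq:convolution}) while the paper applies Young's inequality term by term inside the double sum; both yield the same Gr\"onwall-compatible bound $C\sum_{i=0}^j\vnorma{u_i}^2\tau$.
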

\begin{proof}
If we set $\phi=u_i\tau$ in \refe{DPi} and sum up for $i=1,\dots,j$ we obtain 
\begin{equation}
	\sum_{i=1}^j(\delta u_i,u_i)\tau+\sum_{i=1}^j\vnorma{\nabla u_i}^2\tau+\sum_{i=1}^j(g_i,u_i)_{\Gamma}\tau+\sum_{i=1}^jK_i(h_i,u_i)\tau+\sum_{i=1}^j\sum_{k=1}^i (K_k u_{i-k}\tau,u_i)\tau=\sum_{i=1}^j(f_{i-1},u_i)\tau.
	\label{Pui}
\end{equation}
The summation by parts formula formula says that  
\[\sum_{i=1}^j(\delta u_i,u_i)\tau=\sum_{i=1}^j(u_i-u_{i-1},u_i)=\frac{1}{2}\left(\vnorma{u_j}^2-\vnorma{u_0}^2+\sum_{i=1}^k \vnorma{u_i-u_{i-1}}^2\right).\]
For the third term of \refe{Pui} we get
\[
	\abs{\sum_{i=1}^j(g_i,u_i)_{\Gamma}\tau}\le 
	 \sum_{i=1}^j\vnorma{g_i}_\Gamma \vnorma{u_i}_\Gamma \tau
	\le  C \sum_{i=1}^j\vnorma{g_i}_\Gamma \vnorma{u_i}_{\hk{1}}\tau 
	\le  C_\veps \sum_{i=1}^j\vnorma{g_i}^2_\Gamma\tau +\veps \sum_{i=1}^j\vnorma{u_i}^2_{\hk{1}}\tau
\]
by Cauchy's inequality, the trace theorem and Young's inequality. The fourth term in \refe{Pui} is easily bounded by
\[\abs{ \sum_{i=1}^j K_i(h_i,u_i)\tau}\le   \sum_{i=1}^j\abs{K_i}\vnorma{h_i}\vnorma{u_i}\tau\le  C \sum_{i=1}^j \vnorma{h_i}^2\tau + C  \sum_{i=1}^j\vnorma{u_i}^2\tau,\]
as $K_i$ is bounded, see Proposition~\ref{Ki_bound}. The last term in the left-hand side of \refe{Pui} is
\[
	\abs{\sum_{i=1}^j\sum_{k=1}^i (K_k u_{i-k},u_i)\tau^2}
	\le  \sum_{i=1}^j\sum_{k=1}^i \abs{(K_k u_{i-k},u_i)}\tau^2 
	\le  C\sum_{i=1}^j\sum_{k=1}^i\vnorma{u_{i-k}}^2\tau^2+C\sum_{i=1}^j\sum_{k=1}^i \vnorma{u_i}^2\tau^2
	\le  C\sum_{i=0}^j\vnorma{u_i}^2\tau,\]
again as $K_i$ is bounded, see Proposition~\ref{Ki_bound}. The right-hand side of \refe{Pui} can be estimated as follows
\[\abs{\sum_{i=1}^j(f_{i-1},u_i)\tau}\le   \sum_{i=1}^j\vnorma{f_{i-1}}\vnorma{u_i}\tau
\le  C+C\sum_{i=1}^j\vnorma{u_i}^2\tau ,\]
as $f$ is bounded. 

Putting all things together we obtain
\[\vnorma{u_j}^2+\sum_{i=1}^k \vnorma{u_i-u_{i-1}}^2+(1-\veps)\sum_{i=1}^j\vnorma{\nabla u_i}^2\tau\le  C_\veps +C \sum_{i=1}^j\vnorma{u_i}^2\tau.\]
Fixing a sufficiently small $\veps>0$ and involving the discrete Gr\"{o}nwall lemma we conclude the proof. 
\end{proof}

\begin{lem}
	\label{gradui_bound} 
	Let the conditions of Proposition~\ref{Ki_bound} be satisfied. Moreover suppose that $g\in\ckIlpG{1}{2}$.
	Then there exists $C>0$  such that for any $\tau<\tau_0$  it holds 
	\[\max_{1\le j\le n}\vnorma{\nabla u_j}^2+\sum_{i=1}^n\vnorma{\delta u_i}^2\tau +\sum_{i=1}^n\vnorma{\nabla u_i-\nabla u_{i-1}}^2\le  C.\]
\end{lem}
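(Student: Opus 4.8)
The plan is to mimic the time-continuous argument of Proposition~\ref{gradu_bound}, replacing the test function $\partial_t u$ by its discrete analogue. First I would set $\phi=\delta u_i\tau=u_i-u_{i-1}$ in \refe{DPi} and sum the resulting identities for $i=1,\dots,j$. The first term then gives $\sum_{i=1}^j\vnorma{\delta u_i}^2\tau$, and for the second term the elementary identity $\scal{\nabla u_i}{\nabla u_i-\nabla u_{i-1}}=\half\left(\vnorma{\nabla u_i}^2-\vnorma{\nabla u_{i-1}}^2+\vnorma{\nabla u_i-\nabla u_{i-1}}^2\right)$ telescopes into
\[
\sum_{i=1}^j\scal{\nabla u_i}{\nabla u_i-\nabla u_{i-1}}=\half\vnorma{\nabla u_j}^2-\half\vnorma{\nabla u_0}^2+\half\sum_{i=1}^j\vnorma{\nabla u_i-\nabla u_{i-1}}^2,
\]
which already supplies the two gradient quantities appearing on the left-hand side of the claim; here $\vnorma{\nabla u_0}$ is finite since $u_0\in\hk{1}$.

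The terms not involving the boundary are routine. The fourth term is bounded by $\sum_i\abs{K_i}\vnorma{h_i}\vnorma{\delta u_i}\tau\le C_\veps\sum_i\vnorma{h_i}^2\tau+\veps\sum_i\vnorma{\delta u_i}^2\tau$, using the uniform bound on $K_i$ from Proposition~\ref{Ki_bound} and $h\in\ckIlp{}{2}$. For the convolution term I would first estimate $\vnorma{\sum_{k=1}^iK_ku_{i-k}\tau}\le C\sum_{k=1}^i\vnorma{u_{i-k}}\tau\le C$ in the spirit of \refe{eq:convolution}, invoking Proposition~\ref{ui_bound}, and then split by Cauchy-Young so that the $\delta u_i$-factor is absorbed into $\veps\sum_i\vnorma{\delta u_i}^2\tau$. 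The right-hand side is handled the same way, $\sum_i\scal{f_{i-1}}{\delta u_i}\tau\le C_\veps+\veps\sum_i\vnorma{\delta u_i}^2\tau$, since $f$ is bounded.

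The one term demanding genuine care, and the main obstacle, is the boundary contribution $\sum_{i=1}^j\scal{g_i}{\delta u_i}_\Gamma\tau$: unlike $\vnorma{\delta u_i}$, the trace norm $\vnorma{\delta u_i}_\Gamma$ is not yet under control, so a direct Cauchy estimate would produce an uncontrollable $\vnorma{\delta u_i}_{\hk{1}}$. As in the continuous proof, the remedy is to move the difference quotient onto $g$ by discrete summation by parts (Abel's formula),
\[
\sum_{i=1}^j\scal{g_i}{u_i-u_{i-1}}_\Gamma=\scal{g_j}{u_j}_\Gamma-\scal{g_1}{u_0}_\Gamma-\sum_{i=1}^{j-1}\scal{\delta g_{i+1}}{u_i}_\Gamma\tau,
\]
where the regularity $g\in\ckIlpG{1}{2}$ makes $\vnorma{\delta g_{i+1}}_\Gamma$ uniformly bounded, and the trace theorem together with Proposition~\ref{ui_bound} controls $\sum_i\vnorma{u_i}_\Gamma^2\tau\le C\sum_i\vnorma{u_i}_{\hk{1}}^2\tau\le C$; the single endpoint term is estimated by $\vnorma{g_j}_\Gamma\vnorma{u_j}_\Gamma\le C_\veps+\veps\vnorma{u_j}_{\hk{1}}^2$, whose gradient part $\veps\vnorma{\nabla u_j}^2$ is absorbed on the left.

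Collecting everything and fixing a sufficiently small $\veps>0$, the $\veps$-multiples of $\sum_i\vnorma{\delta u_i}^2\tau$ and of $\vnorma{\nabla u_j}^2$ are absorbed, leaving $\left(1-\veps\right)\sum_{i=1}^j\vnorma{\delta u_i}^2\tau+\left(\half-\veps\right)\vnorma{\nabla u_j}^2+\half\sum_{i=1}^j\vnorma{\nabla u_i-\nabla u_{i-1}}^2\le C_\veps$, the right-hand side being bounded uniformly in $j$ and $\tau$ by Proposition~\ref{ui_bound} (a discrete Gr\"onwall step is available should any residual $\sum_i\vnorma{\nabla u_i}^2\tau$ term be retained). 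Taking the maximum over $j$ then yields the assertion.
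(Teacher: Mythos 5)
Your proposal is correct and follows essentially the same route as the paper's own proof: testing with $\phi=\delta u_i\tau$, telescoping the gradient term via the identity $\scal{\nabla u_i}{\nabla u_i-\nabla u_{i-1}}=\half\left(\vnorma{\nabla u_i}^2-\vnorma{\nabla u_{i-1}}^2+\vnorma{\nabla u_i-\nabla u_{i-1}}^2\right)$, handling the boundary term by discrete summation by parts so that the difference quotient falls on $g$, and absorbing all $\veps$-terms before fixing $\veps$. The only differences are cosmetic (you bound the norm of the discrete convolution before applying Cauchy--Young, whereas the paper estimates inside the double sum), and your closing remark is accurate: no Gr\"onwall step is actually needed since the final right-hand side is already a constant.
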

\begin{proof}
If we set $\phi=\delta u_i\tau$ in \refe{DPi} and sum up for $i=1,\dots,j$  we obtain 
\begin{equation}
	\sum_{i=1}^j\vnorma{\delta u_i}^2\tau+\sum_{i=1}^j(\nabla u_i,\nabla\delta u_i)\tau+\sum_{i=1}^j(g_i,\delta u_i)_{\Gamma}\tau+\sum_{i=1}^jK_i(h_i,\delta u_i)\tau+\sum_{i=1}^j\sum_{k=1}^i (K_k u_{i-k},\delta u_i)\tau^2=\sum_{i=1}^j(f_{i-1},\delta u_i)\tau.
	\label{Puit}
\end{equation}
The second term can be rewritten as 
\[ 
	\sum_{i=1}^j(\nabla u_i,\nabla\delta u_i)\tau=\sum_{i=1}^j(\nabla u_i,\nabla u_i-\nabla u_{i-1})=\frac{1}{2}\left(\vnorma{\nabla u_j}^2-\vnorma{\nabla u_0}^2+\sum_{i=1}^k \vnorma{\nabla u_i-\nabla u_{i-1}}^2\right).
\]
For the third one we first use summation by parts,
\[\sum_{i=1}^j(g_i,u_i-u_{i-1})_{\Gamma}=(g_j,u_j)_{\Gamma}-(g_0,u_0)_{\Gamma}-\sum_{i=1}^j(g_i-g_{i-1},u_i)_{\Gamma}\]
and get
\[
	\begin{array}{rlll}\D
		\abs{\sum_{i=1}^j(g_i,\delta u_i)_{\Gamma}\tau}&\D \le  \vnorma{g_j}_\Gamma \vnorma{u_j}_\Gamma +\vnorma{g_0}_\Gamma \vnorma{u_0}_\Gamma +\sum_{i=1}^j\vnorma{\delta g_i}_\Gamma \vnorma{u_i}_\Gamma\tau 
		\\&\D \le  C_{\veps}+\veps\vnorma{u_j}^2_{\hk{1}}+C \sum_{i=1}^j \vnorma{u_i}^2_{\hk{1}}\tau
		\\&\D \le C_{\veps}+\veps\vnorma{\nabla u_j}^2
	\end{array}
\]
by Cauchy's inequality, the trace theorem, Young's inequality, and Proposition \ref{ui_bound}.
The fourth term in \refe{Puit} is easily bounded by
\[\abs{\sum_{i=1}^jK_i(h_i,\delta u_i)\tau}\le  \sum_{i=1}^j\abs{K_i}\vnorma{h_i}\vnorma{\delta u_i}\tau\le  C_{\veps} \sum_{i=1}^j \vnorma{h_i}^2\tau+ \veps\sum_{i=1}^j\vnorma{\delta u_i}^2\tau \le  C_{\veps}  + \veps\sum_{i=1}^j\vnorma{\delta u_i}^2\tau ,\]
as $K$ is bounded, see Proposition~\ref{Ki_bound}.
The last term in the left-hand side of \refe{Puit} can be estimated as follows
\[
	\begin{array}{rlll}\D
		\abs{\sum_{i=1}^j\sum_{k=1}^i (K_k u_{i-k},\delta u_i)\tau^2 }&\D  \le \sum_{i=1}^j\sum_{k=1}^i\abs{K_k }\vnorma{u_{i-k}}\vnorma{\delta u_i}\tau^2 
		\\&\D \le \sum_{i=1}^j\sum_{k=1}^i\left(C_{\veps}\vnorma{u_{i-k}}^2+\veps\vnorma{\delta u_i}^2\right)\tau^2 
		\\&\D \le C_{\veps}+\veps  \sum_{i=1}^j \vnorma{\delta u_i}^2\tau
	\end{array}
\]
using Propositions \ref{Ki_bound} and \ref{ui_bound}. The right-hand side of \refe{Puit} can be enlarged by 
\[
	\abs{\sum_{i=1}^j(f_{i-1},\delta u_i)\tau}\le  \sum_{i=1}^j\vnorma{f_{i-1}}\vnorma{\delta u_i} \tau \le   C_{\veps}+\veps  \sum_{i=1}^j \vnorma{\delta u_i}^2\tau 
\]
as $f$ is bounded. 

Putting all things together we obtain
\[
	(1-\veps)\sum_{i=1}^j\vnorma{\delta u_i}^2\tau +\left(\half -\veps\right) \vnorma{\nabla u_j}^2+ \half \sum_{i=1}^k \vnorma{\nabla u_i-\nabla u_{i-1}}^2\le  C_{\veps}. 
\]
  Fixing a suitable  $\veps>0$  we conclude the proof. 
\end{proof}

Inspecting the relation \refe{DPi} we may write for any $\phi\in\hk{1}$ that 
\begin{equation}
	\scal{-\Delta u_i}{\phi} = \scal{\nabla u_i}{\nabla\phi}+ \scal{g_i}{\phi}_\Gamma =\scal{f_{i-1}}{\phi} - \scal{\delta u_i}{\phi} - K_i\scal{h_i}{\phi} -\scal{\sum_{k=1}^i K_k u_{i-k}\tau}{\phi}.
	\label{eq:DPi}
\end{equation}
The term $-\Delta u_i$ has to be understood in the sense of duality, as a functional on $\hk{1}$. The right-hand side of \refe{eq:DPi} can be estimated by 
$C(1+\vnorma{\delta u_i})\vnorma{\phi}$. Thus there exists an extension of $-\Delta u_i$ to $\lp{2}$ according to Hahn-Banach theorem, cf. \cite[p. 173]{ljusternik.sobolev}. This extension will have the same norm as the functional on $\hk{1}$. 
Therefore taking into account the assumptions and the stability results from Proposition \ref{gradui_bound} we immediately obtain
\begin{equation}
	\label{eq:deltaUI}
	\sum_{i=1}^n\vnorma{\Delta u_i}^2\tau \le C + C \sum_{i=1}^n\vnorma{\delta u_i}^2\tau \le C.
\end{equation}

\begin{lem}
	\label{lapui_bound} 
	Assume that  $g\in\ckIX{1}{\lpG{2}}$, $h\in\ckIX{}{\hk{1}}$, $ \min_{t\in\I} | (h(t),1)| \ge\omega>0$, $u_0\in\hk{2}$ and $m\in\conTk{1}$. 
	The function  $f$ is supposed to be bounded, i.e. $|f|\le C$, and  Lipschitz continuous in all variables. 
	Then there exist $C>0$ such that for any $\tau<\tau_0$   we have
	\begin{itemize}
	 \item [(i)] $\D \max_{1\le j\le n}\vnorma{\Delta u_j}^2+\sum_{i=1}^n\vnorma{\nabla \delta u_i}^2\tau  +\sum_{i=1}^n\vnorma{\Delta u_i-\Delta u_{i-1}}^2\le  C$
	 \item[(ii)] $\D \max_{1\le j\le n}\vnorma{\delta u_j}\le C$.
	\end{itemize}
\end{lem}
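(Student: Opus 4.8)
The plan is to transcribe the continuous estimate of Proposition~\ref{lapu_bound} to the Rothe scheme, testing with the second difference quotient. For part (i) I would set $\phi=-\Delta\delta u_i\tau$ in \refe{Pivar} and sum for $i=1,\dots,j$, arriving at the discrete counterpart of \refe{Plaput},
\[
-\sum_{i=1}^j\scal{\delta u_i}{\Delta\delta u_i}\tau+\sum_{i=1}^j\scal{\Delta u_i}{\Delta\delta u_i}\tau-\sum_{i=1}^jK_i\scal{h_i}{\Delta\delta u_i}\tau-\sum_{i=1}^j\scal{\textstyle\sum_{k=1}^iK_ku_{i-k}\tau}{\Delta\delta u_i}\tau=-\sum_{i=1}^j\scal{f_{i-1}}{\Delta\delta u_i}\tau,
\]
where $\Delta u_i$, and hence $\Delta\delta u_i$, is the $\lp{2}$-functional produced in \refe{eq:DPi} and bounded in \refe{eq:deltaUI}; this makes the Green manipulations below legitimate once each term is first read on $\hk{1}$. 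Green's identity together with the Neumann condition $-\nabla\delta u_i\cdot\normal=\delta g_i$ gives $-\scal{\delta u_i}{\Delta\delta u_i}=\vnorma{\nabla\delta u_i}^2+\scal{\delta u_i}{\delta g_i}_\Gamma$, whose boundary part is absorbed into $\veps\sum_i\vnorma{\nabla\delta u_i}^2\tau+C_\veps$ by the trace theorem, Young's inequality and Proposition~\ref{gradui_bound}; discrete summation by parts turns the second term into the telescoping identity $\sum_{i=1}^j\scal{\Delta u_i}{\Delta\delta u_i}\tau=\half\bigl(\vnorma{\Delta u_j}^2-\vnorma{\Delta u_0}^2+\sum_{i=1}^j\vnorma{\Delta u_i-\Delta u_{i-1}}^2\bigr)$, with $\vnorma{\Delta u_0}$ finite because $u_0\in\hk{2}$.

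For the kernel term, Green's formula with $h\in\ckIX{}{\hk{1}}$ gives $-K_i\scal{h_i}{\Delta\delta u_i}=K_i[\scal{\nabla h_i}{\nabla\delta u_i}+\scal{h_i}{\delta g_i}_\Gamma]$, which stays below $\veps\sum_i\vnorma{\nabla\delta u_i}^2\tau+C_\veps$ thanks to $\max_i\abs{K_i}\le C$ (Proposition~\ref{Ki_bound}) and the trace theorem. The convolution term is handled identically after recording the discrete analogues of \refe{eq:convolution}, namely $\vnorma{\sum_{k=1}^iK_k\nabla u_{i-k}\tau}^2\le C\sum_i\vnorma{\nabla u_i}^2\tau$ and the matching trace bound, combined with Proposition~\ref{ui_bound}.

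The delicate term is the source. Here I would apply discrete summation by parts in time to move the backward difference off $\Delta\delta u_i$,
\[
-\sum_{i=1}^j\scal{f_{i-1}}{\Delta u_i-\Delta u_{i-1}}=\sum_{i=1}^{j-1}\scal{\delta f_i}{\Delta u_i}\tau-\scal{f_{j-1}}{\Delta u_j}+\scal{f_0}{\Delta u_0},
\]
and then use the Lipschitz continuity of $f$ through $\vnorma{\delta f_i}\le C(\vnorma{\delta u_i}+\vnorma{\nabla\delta u_i})$; together with Proposition~\ref{gradui_bound}, \refe{eq:deltaUI} and the boundedness of $f$ this keeps the whole contribution under $\veps\sum_i\vnorma{\nabla\delta u_i}^2\tau+\veps\vnorma{\Delta u_j}^2+C_\veps\sum_i\vnorma{\Delta u_i}^2\tau+C_\veps$. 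Collecting everything yields
\[
(1-\veps)\sum_{i=1}^j\vnorma{\nabla\delta u_i}^2\tau+\left(\half-\veps\right)\vnorma{\Delta u_j}^2+\half\sum_{i=1}^j\vnorma{\Delta u_i-\Delta u_{i-1}}^2\le C_\veps+C_\veps\sum_{i=1}^j\vnorma{\Delta u_i}^2\tau,
\]
and after fixing $\veps$ small the discrete Gr\"onwall lemma closes (i).

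For (ii) I read \refe{Pivar} as the $\lp{2}$ identity $\delta u_i=\Delta u_i-K_ih_i-\sum_{k=1}^iK_ku_{i-k}\tau+f_{i-1}$, so that $\vnorma{\delta u_i}\le\vnorma{\Delta u_i}+\abs{K_i}\vnorma{h_i}+\sum_{k=1}^i\abs{K_k}\vnorma{u_{i-k}}\tau+\vnorma{f_{i-1}}\le C$ uniformly in $j$, using part (i), Proposition~\ref{Ki_bound}, the boundedness of $f$ and Proposition~\ref{ui_bound}. The main obstacle I anticipate is the source term: performing the time summation by parts with the correct endpoint contributions and, more fundamentally, justifying every use of Green's formula when $\Delta u_i$ is only available as the $\lp{2}$-extended functional of \refe{eq:DPi}; once these are in place the remaining bounds are faithful discrete echoes of Proposition~\ref{lapu_bound}.
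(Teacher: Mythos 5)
Your proposal is correct and follows essentially the same route as the paper: testing \refe{Pivar} with $\phi=-\Delta\delta u_i\tau$, telescoping the $\scal{\Delta u_i}{\Delta\delta u_i}\tau$ sum, moving Laplacians onto $h_i$ and the convolution via Green's formula with the discrete Neumann data $\delta g_i$, performing the same discrete summation by parts on the source term with $\vnorma{\delta f_i}\le C(\vnorma{\delta u_i}+\vnorma{\nabla\delta u_i})$, and closing with the discrete Gr\"onwall lemma; part (ii) is likewise the paper's argument of reading the scheme as an $\lp{2}$ identity (via the Hahn--Banach extension of $-\Delta u_i$ set up after \refe{eq:DPi}) and invoking the already established bounds. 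The only deviation is an immaterial sign on the boundary term $\scal{h_i}{\delta g_i}_\Gamma$, which disappears once absolute values are taken.
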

\begin{proof}
$(i)$ 
If we set $\phi=-\Delta \delta u_i\tau$ in \refe{Pivar} and sum up for $i=1,\dots,j$  we obtain 
\begin{equation}
	-\sum_{i=1}^j(\delta u_i,\Delta \delta u_i)\tau + \sum_{i=1}^j(\Delta u_i,\Delta \delta u_i)\tau - \sum_{i=1}^jK_i(h_i,\Delta \delta u_i) \tau- \sum_{i=1}^j\sum_{k=1}^i (K_k u_{i-k},\Delta \delta u_i)\tau^2 = -\sum_{i=1}^j(f_{i-1},\Delta \delta u_i)\tau .
	\label{Plapuit}
\end{equation}
The first two terms can be rewritten as 
\[
	-\sum_{i=1}^j(\delta u_i,\Delta \delta u_i\tau)=\sum_{i=1}^j\vnorma{\nabla\delta u_i}^2\tau + \sum_{i=1}^j(\delta u_i,\delta g_i)_\Gamma\tau ,\quad \sum_{i=1}^j(\Delta u_i,\Delta \delta u_i) \tau 
	= \frac{1}{2}\left(\vnorma{\Delta u_j}^2-\vnorma{\Delta u_0}^2+\sum_{i=1}^k \vnorma{\Delta u_i-\Delta u_{i-1}}^2\right).
\]
Using the Cauchy and Young inequalities, the trace theorem and Proposition \ref{gradui_bound} we successively deduce that
\[
	\begin{array}{rlll}\D
		\abs{\sum_{i=1}^j(\delta u_i,\delta g_i)_\Gamma\tau}&\D \le \sum_{i=1}^j\vnorma{\delta u_i}_\Gamma \vnorma{\delta g_i}_\Gamma \tau
		\\&\D\le \veps \sum_{i=1}^j\vnorma{\delta u_i}_\Gamma^2\tau + C_\veps \sum_{i=1}^j\vnorma{\delta g_i}_\Gamma^2 \tau
		\\&\D\le \veps \sum_{i=1}^j\vnorma{\delta u_i}_{\hk{1}}^2\tau + C_\veps 
		\\&\D\le \veps \sum_{i=1}^j\vnorma{\delta \nabla u_i}^2\tau + C_\veps  .
	\end{array}
\]
For the third term in \refe{Plapuit} we first integrate by parts,
\[-K_i(h_i,\Delta \delta u_i)\tau = K_i\left[(\nabla h_i,\nabla\delta u_i)-(h_i,\delta g_i)_\Gamma \right] \tau\]
and get by Cauchy's and Young's inequality
\[
	\abs{\sum_{i=1}^jK_i(h_i,\Delta \delta u_i)\tau }
	\le  C_{\veps}\sum_{i=1}^j\vnorma{\nabla h_i}^2\tau + \veps\sum_{i=1}^j\vnorma{\nabla\delta u_i}^2\tau
	+ C\sum_{i=1}^j\left(\vnorma{h_i}^2_\Gamma + \vnorma{\delta g_i}^2_\Gamma \right)\tau
	\le C_{\veps} + \veps\sum_{i=1}^j\vnorma{\nabla\delta u_i}^2\tau
\]
as $K_i$ (see Proposition~\ref{Ki_bound}) is bounded, $h_i\in\hk{1}$ and $\vnorma{h_i}^2_\Gamma $ is finite by the trace theorem. The last term in the left-hand side of \refe{Plapuit} is rewritten as
\[-\sum_{i=1}^j\sum_{k=1}^i (K_k u_{i-k},\Delta \delta u_i) \tau^2 = \sum_{i=1}^j\sum_{k=1}^i\left[(K_k\nabla u_{i-k},\nabla\delta u_i)-(K_k u_{i-k},\delta g_i)_\Gamma \right]\tau^2,\]
which gives
\[
	\begin{array}{rlll}\D
	\abs{\sum_{i=1}^j\sum_{k=1}^i (K_k u_{i-k},\Delta \delta u_i)\tau^2 }
	&\D \le  \sum_{i=1}^j\sum_{k=1}^i\abs{K_k}\vnorma{\nabla u_{i-k}}\vnorma{\nabla\delta u_i}\tau^2 + \sum_{i=1}^j\sum_{k=1}^i\abs{K_k}\vnorma{u_{i-k}}_\Gamma \vnorma{\delta g_i}_\Gamma \tau^2
	\\&\D \le C\sum_{i=1}^j \vnorma{\nabla\delta u_i}\tau + C\sum_{i=1}^j \vnorma{\delta g_i}_\Gamma \tau
	\\&\D\le C_\veps + \veps \sum_{i=1}^j \vnorma{\nabla\delta u_i}^2\tau 
	\end{array}
\]
as Proposition~\ref{Ki_bound}, the trace theorem, $u_i$ and $\nabla u_i$ are $\lp{2}$-bounded (Proposition~\ref{gradui_bound}). The right-hand side of \refe{Plapuit} is rewritten by summation by parts as
\[\sum_{i=1}^j(f_{i-1},\delta \Delta u_i)\tau = \sum_{i=1}^j(f_{i-1},\Delta u_i -\Delta u_{i-1})= (f_{j-1},\Delta u_j )-(f_{0},\Delta u_0)-\sum_{i=1}^{j-1}(\delta f_i ,\Delta u_i)\tau\]
so
\[
	\begin{array}{rlll}\D
		\abs{\sum_{i=1}^j(f_{i-1},\Delta \delta u_i\tau)}
		&\D \le  C_{\veps} + \veps \vnorma{\Delta u_j}^2 +\veps \sum_{i=1}^{j-1}\vnorma{\delta f_{i}}^2\tau + C_{\veps}\sum_{i=1}^{j-1} \vnorma{\Delta u_i}^2\tau
		\\&\D \le  C_{\veps} + \veps \vnorma{\Delta u_j}^2 +\veps \sum_{i=1}^{j-1}\vnorma{\delta u_{i}}^2\tau +\veps \sum_{i=1}^{j-1}\vnorma{\delta \nabla u_{i}}^2\tau + C_{\veps}\sum_{i=1}^{j-1} \vnorma{\Delta u_i}^2\tau
		\\&\D \le  C_{\veps} + \veps \vnorma{\Delta u_j}^2 +\veps \sum_{i=1}^{j-1}\vnorma{\delta \nabla u_{i}}^2\tau + C_{\veps}\sum_{i=1}^{j-1} \vnorma{\Delta u_i}^2\tau
	\end{array}
\]
as $u_0\in\hk{2}$, $\delta f_{i}=\nabla f_{i}\cdot\langle\delta u_{i},\delta\nabla u_{i}\rangle$ and $f$ is Lipschitz.

Putting all things together we arrive at 
\[
	(1-\veps)\sum_{i=1}^j\vnorma{\delta \nabla u_i}^2\tau +\left(\half -\veps\right) \vnorma{\Delta u_j}^2+ \half \sum_{i=1}^k \vnorma{\Delta u_i-\Delta u_{i-1}}^2\le  C_{\veps} + C_{\veps}\sum_{i=1}^{j} \vnorma{\Delta u_i}^2\tau. 
\]
Choosing a suitable $\veps>0$ we close the proof by Gr\"{o}nwall's argument. 

$(ii)$
The relation \refe{eq:DPi} gives for any $\phi\in\hk{1}$ 
\[ 
	\scal{\delta u_i}{\phi}   = \scal{f_{i-1}}{\phi}  - K_i\scal{h_i}{\phi} -\scal{\sum_{k=1}^i K_k u_{i-k}\tau}{\phi} + \scal{\Delta u_i}{\phi}.
\]
The stability results from Propositions~\ref{Ki_bound}--\ref{lapui_bound}$(i)$ ensure that the right-hand side can be seen as a linear bounded functional on $\lp{2}$. Thus, the left-hand side allows extension  from $\hk{1}$ to $\lp{2}$ with the same norm estimate through the Hahn-Banach theorem 
 (cf.  the deduction after \refe{eq:DPi}), i.e.,
 \[\vnorma{\delta u_i} =  \sup_{\vnorma{\phi}\le 1}\abs{(\delta u_i,\phi)}\le  \vnorma{\Delta u_i-K_i h_i -\sum_{k=1}^iK_k u_{i-k}\tau+f_{i-1}}\le  C .\]
\end{proof}

\begin{lem}
	\label{dKi_bound} 
	Assume that  $g\in\ckIX{1}{\lpG{2}}$, $h\in\ckIX{}{\hk{1}}\cap \ckIX{1}{\lp{2}}$, $ \min_{t\in\I} | (h(t),1)| \ge\omega>0$, $u_0\in\hk{2}$ and $m\in\conTk{2}$. 
	The function  $f$ is supposed to be bounded, i.e. $|f|\le C$, and  Lipschitz continuous in all variables. 
	Then there exist $C>0$ such that for any $\tau<\tau_0$   we have
	\[\sum_{i=1}^j\abs{\delta K_i}^2 \tau \le C.\]
\end{lem}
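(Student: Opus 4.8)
The plan is to reproduce, at the discrete level, the argument used for the continuous estimate of $K'$ in Proposition~\ref{Kt_bound}: form the first difference of the measurement identity \refe{DMPi}, isolate $\delta K_i$, and absorb every resulting term into the a priori bounds already at our disposal. Concretely, I would write \refe{DMPi} at levels $i$ and $i-1$, subtract, and divide by $\tau$. The only delicate piece is the discrete convolution $\tau\sum_{k=1}^i K_k m_{i-k}$; applying $\delta$ to it and reindexing produces $K_i m_0+\tau\sum_{k=1}^{i-1} K_k\,\delta m_{i-k}$, which is the exact discrete counterpart of the continuous contribution $Km(0)+K\ast m'$ appearing in \refe{MP'}. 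Writing the product term by the discrete Leibniz rule as $\delta\bigl(K_i(h_i,1)\bigr)=\delta K_i\,(h_i,1)+K_{i-1}(\delta h_i,1)$ then exposes the unknown $\delta K_i$.

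Solving the difference identity for $\delta K_i$ and using $\abs{(h_i,1)}\ge\omega$ gives
\[
\omega\abs{\delta K_i}\le\abs{(\delta f_{i-1},1)}+\abs{\delta m'_i}+\abs{(\delta g_i,1)_\Gamma}+\abs{K_{i-1}}\abs{(\delta h_i,1)}+\tau\sum_{k=1}^{i-1}\abs{K_k}\abs{\delta m_{i-k}}+\abs{K_i}\abs{m_0}.
\]
Next I would bound each term on the right by the hypotheses: $\abs{\delta m'_i}\le\max_{\I}\abs{m''}\le C$ since $m\in\conTk{2}$; $\abs{(\delta g_i,1)_\Gamma}\le C$ since $g\in\ckIX{1}{\lpG{2}}$; $\abs{K_{i-1}}\abs{(\delta h_i,1)}\le C$ because $K_i$ is bounded (Proposition~\ref{Ki_bound}) and $h\in\ckIX{1}{\lp{2}}$; and the discrete convolution together with $\abs{K_i}\abs{m_0}$ are $\le C$ because $\abs{K_k}\le C$, $\abs{\delta m_{i-k}}\le C$ and $i\tau\le T$. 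The one term not controlled by a constant is $(\delta f_{i-1},1)$: Lipschitz continuity of $f$ yields $\vnorma{\delta f_{i-1}}\le C\bigl(\vnorma{\delta u_{i-1}}+\vnorma{\delta\nabla u_{i-1}}\bigr)$, hence $\abs{(\delta f_{i-1},1)}\le C\bigl(1+\vnorma{\nabla\delta u_{i-1}}\bigr)$, where $\vnorma{\delta u_{i-1}}\le C$ is Proposition~\ref{lapui_bound}$(ii)$.

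Collecting these gives the pointwise bound $\abs{\delta K_i}\le C\bigl(1+\vnorma{\nabla\delta u_{i-1}}\bigr)$. Squaring, multiplying by $\tau$ and summing then yields
\[
\sum_{i=1}^j\abs{\delta K_i}^2\tau\le C\sum_{i=1}^j\bigl(1+\vnorma{\nabla\delta u_{i-1}}^2\bigr)\tau\le C+C\sum_{i=1}^n\vnorma{\nabla\delta u_i}^2\tau\le C,
\]
the last inequality being precisely the estimate $\sum_{i=1}^n\vnorma{\nabla\delta u_i}^2\tau\le C$ of Proposition~\ref{lapui_bound}$(i)$. This is the discrete mirror of the final step in Proposition~\ref{Kt_bound}, with Proposition~\ref{lapui_bound} playing the role that Propositions~\ref{gradu_bound} and \ref{lapu_bound} play there.

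I expect the main obstacle to be the bookkeeping in the discrete convolution: correctly generating $K_i m_0$ and $\tau\sum_{k=1}^{i-1}K_k\,\delta m_{i-k}$ under $\delta$ without leaving a spurious $\OO{1}$ remainder, and interpreting the lower endpoint $i=1$ consistently, where $\delta f_0$, $\delta m'_1$ and $\vnorma{\nabla\delta u_0}$ implicitly require the initial data. Everything else is a routine transcription of the continuous proof once the uniform bounds $\max_i\vnorma{\delta u_i}\le C$ and $\sum_i\vnorma{\nabla\delta u_i}^2\tau\le C$ from Proposition~\ref{lapui_bound} are invoked.
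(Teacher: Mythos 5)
Your argument for $i\ge 2$ is exactly the paper's: apply $\delta$ to \refe{DMPi} via the Leibniz rule $\delta(a_ib_i)=\delta a_i\,b_i+a_{i-1}\,\delta b_i$, reindex the discrete convolution into $K_im_0+\tau\sum_{k=1}^{i-1}K_k\,\delta m_{i-k}$, bound the data terms using $m\in\conTk{2}$, $g\in\ckIX{1}{\lpG{2}}$, $h\in\ckIX{1}{\lp{2}}$ and Proposition~\ref{Ki_bound}, bound $\abs{(\delta f_{i-1},1)}$ by the Lipschitz continuity of $f$, and close with $\sum_i\vnorma{\nabla\delta u_i}^2\tau\le C$ from Proposition~\ref{lapui_bound}. (The paper keeps both $\vnorma{\delta u_{i-1}}^2$ and $\vnorma{\delta\nabla u_{i-1}}^2$ in the final sum and invokes Propositions~\ref{gradui_bound} and \ref{lapui_bound}; your use of the uniform bound $\max_i\vnorma{\delta u_i}\le C$ from Proposition~\ref{lapui_bound}$(ii)$ is an equivalent cosmetic variation.)

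The genuine gap is the index $i=1$, which you flag as an anticipated ``obstacle'' but do not resolve --- and it is not mere bookkeeping, because your final display is undefined there: $\delta K_1=(K_1-K_0)/\tau$ requires a value $K_0$ that the discrete scheme of Proposition~\ref{Ki_bound} never produces (it starts at $i=1$), and the pointwise bound at $i=1$ would involve $\delta f_0$ and $\vnorma{\nabla\delta u_0}$, i.e.\ the nonexistent $u_{-1}$. This is precisely where the hypothesis $u_0\in\hk{2}$ enters the paper's proof: it guarantees that the PDE of \refe{problem} holds pointwise at $t=0$, so applying the measurement to it yields \refe{DMP0}, $m'_0+(g_0,1)_{\Gamma}+K_0(h_0,1)=(f_{0},1)$, which serves as the definition of $K_0$ (note the convolution vanishes at $t=0$, and $\abs{K_0}\le C$ since $\abs{(h_0,1)}\ge\omega$). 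Subtracting \refe{DMP0} from \refe{DMPi} with $i=1$ gives \refe{eq:i0}, and in this subtraction the two $f$-terms cancel exactly, because the scheme is semi-implicit: the right-hand side of \refe{DMPi} at $i=1$ is $(f_0,1)$, the same as in \refe{DMP0}. Hence $\omega\abs{\delta K_1}\le\abs{K_1m_0}+\abs{K_0(\delta h_1,1)}+\abs{(\delta g_1,1)_{\Gamma}}+\abs{\delta m'_1}\le C$, with no $\delta f_0$ or $\nabla\delta u_0$ appearing at all (your worry about $\delta m'_1$ is unfounded, since $m\in\conTk{2}$ makes $m'_0$ available). With this one supplement --- defining $K_0$ via \refe{DMP0} and treating $i=1$ by subtraction rather than by the generic formula --- your proof is complete and coincides with the paper's.
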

\begin{proof}
The fact that $u_0\in\hk{2}$ implies that the PDE from \refe{problem} is fulfilled at $t=0$, i.e. one can define the initial value for $\partial_t u$ in the following way
\[\partial_t u(0):= f(u_0,\nabla u_0) + \Delta u_0 - K(0) h(0)\in\lp{2}.\]
Applying measurement to this equation gives
\[m'_0+(g_0,1)_{\Gamma}+K_0(h_0,1)=(f_{0},1).\tag{DMP0}\label{DMP0}\]
We would like to apply the $\delta$-operator to \refe{DMPi}. Using the rule $\delta(a_ib_i) = \delta a_i\ b_i + a_{i-1}\ \delta b_i$ we get for $i\ge 2$
\[\delta m'_i+(\delta g_i,1)_{\Gamma}+\delta K_i\ (h_i,1)+K_{i-1}(\delta h_i,1)+K_i m_0+ \sum_{k=1}^{i-1} K_k \delta m_{i-k}\tau=(\delta f_{i-1},1).\]
Thus for $i\ge 2$ it holds 
\[
	\abs{\delta K_i}\abs{(h_i,1)}\le \abs{\delta m'_i}+\abs{(\delta g_i,1)_{\Gamma}}+\abs{K_{i-1}(\delta h_i,1)}+\abs{K_i m_0}+\sum_{k=1}^{i-1} \abs{K_k \delta m_{i-k}}\tau+\abs{(\delta f_{i-1},1)}
	\le  C+C\left(\vnorma{\delta u_{i-1}}+\vnorma{\delta\nabla u_{i-1}}\right).
\]
Further,  we subtract \refe{DMP0} from \refe{DMPi} for $i=1$ to get
\begin{equation}
	\label{eq:i0}
	\delta m'_1+(\delta g_1,1)_{\Gamma}+\delta K_1\ (h_1,1)+K_{0}(\delta h_1,1)+K_1 m_0 =0 
\end{equation}
and we estimate
\[ \abs{\delta K_1}\ \abs{(h_1,1)} \le \abs{K_1 m_0} + \abs{K_{0}(\delta h_1,1)} +\abs{(\delta g_1,1)_{\Gamma}} + \abs{\delta m'_1}.\]
The proof is completed by applying Propositions \ref{gradui_bound} and \ref{lapui_bound} to
\[\sum_{i=1}^j\abs{\delta K_i}^2\tau\le  C+C\sum_{i=2}^j\left(\vnorma{\delta u_{i-1}}^2+\vnorma{\delta\nabla u_{i-1}}^2\right)\tau\le C\]
as $\abs{(h_i,1)}\ge\omega>0$.
\end{proof}

\section{Existence of a solution}
\label{sec:existence}
Now, let us introduce the following piecewise linear function in time 
\[u_n:\I\to\lp{2}:t\mapsto\begin{cases}u_0 & t=0\\
u_{i-1} + (t-t_{i-1})\delta u_i &
     t\in (t_{i-1},t_i]\end{cases},\quad 0\le  i\le  n,\]
and a step function
\[\bar u_n:\I\to\lp{2}:t\mapsto\begin{cases}u_0 & t=0\\
 u_i &     t\in (t_{i-1},t_i]\end{cases},\quad 0\le  i\le  n.\]
 Similarly we define $\bar K_n$, $\bar h_n$, $\bar g_n$, $\bar m_n$ and $\overline{m'}_n$. These prolongations are also called Rothe's (piecewise linear and continuous, or piecewise constant)  functions.
Now, we can rewrite  \refe{DPi} and \refe{DMPi} on the whole time frame as\footnote{${\lfloor t\rfloor_\tau}=i$ when $t\in(t_{i-1},t_i]$}
\[(\partial_t u_n,\phi)+(\nabla \bar u_n,\nabla\phi)+(\bar g_n,\phi)_{\Gamma}+\bar K_n(\bar h_n,\phi)+\sum_{k=1}^{\lfloor t\rfloor_\tau} (\bar K_n(t_k) \bar u_n(t-t_k)\tau,\phi)=(f(\bar u_n(t-\tau),\nabla\bar u_n(t-\tau)),\phi).\tag{DP}\label{DP}\]
and
\[\overline{m'}_n+(\bar g_n,1)_{\Gamma}+\bar K_n(\bar h_n,1)+\sum_{k=1}^{\lfloor t\rfloor_\tau} \bar K_n(t_k) \bar m_n(t-t_k)\tau=(f(\bar u_n(t-\tau),\nabla\bar u_n(t-\tau)),1).\tag{DMP}\label{DMP}\]

Now, we are in a position to prove the existence of a weak solution to \refe{P} and \refe{MP}.
\begin{stel} 
	\label{thm:existence}
	Suppose the conditions of Proposition~\ref{dKi_bound} are fulfilled.
	Then there exists a weak solution $\langle u,K\rangle$ to \refe{P} and \refe{MP}, where
	$u\in\ckIX{}{\hk{1}}$, $\partial_t u \in \Leb^\infty\left(\Iopen,\lp{2}\right)$, $K\in \conTk{}$, $K'\in \Leb^2(0,T)$.
\end{stel}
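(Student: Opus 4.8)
The plan is to prove Theorem~\ref{thm:existence} by a compactness argument built upon the Rothe functions, exploiting the a priori estimates accumulated in Propositions~\ref{Ki_bound}--\ref{dKi_bound}. First I would collect the uniform bounds on the prolongations. From Proposition~\ref{ui_bound} and Proposition~\ref{gradui_bound} the sequences $\{\bar u_n\}$ and $\{u_n\}$ are uniformly bounded in $\ckIX{}{\hk{1}}$, and $\{\partial_t u_n\}$ is bounded in $\Leb^2\left(\Iopen,\lp{2}\right)$; Proposition~\ref{lapui_bound} upgrades this, giving $\{\Delta \bar u_n\}$ bounded in $\Leb^\infty\left(\Iopen,\lp{2}\right)$ and $\{\partial_t u_n\}$ bounded in $\lin{\Iopen}\times\lp{2}$, i.e. in $\Leb^\infty\left(\Iopen,\lp{2}\right)$. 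On the kernel side, Proposition~\ref{Ki_bound}$(ii)$ bounds $\{\bar K_n\}$ uniformly in $\lin{\Iopen}$ and Proposition~\ref{dKi_bound} bounds $\{\delta K_i\}$ in the discrete $\Leb^2$-norm, so that $\{K_n'\}$ is bounded in $\Leb^2(0,T)$ and $\{K_n\}$ is bounded in $\Hi^1(0,T)$.

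Next I would extract convergent subsequences (not relabelled). By reflexive weak compactness and the Aubin--Lions type compactness embedding $\Hi^1(0,T)\imbed\imbed\Cont(\I)$ for the scalar kernel, I obtain $K_n\strong K$ in $\conTk{}$ and $K_n'\weak K'$ in $\Leb^2(0,T)$. For the state, the bound on $\partial_t u_n$ in $\Leb^2\left(\Iopen,\lp{2}\right)$ together with the $\hk{1}$-bound and the compact embedding $\hk{1}\imbed\imbed\lp{2}$ yields, via a Rothe/Aubin--Lions compactness lemma, $u_n\strong u$ in $\CIH{\I,\lp{2}}$ and $\bar u_n\strong u$ in $\Leb^2\left(\Iopen,\lp{2}\right)$, while $\nabla \bar u_n\weak \nabla u$ in $\Leb^2\left(\Iopen,\lp{2}\right)$ and $\partial_t u_n\weak \partial_t u$ in $\Leb^2\left(\Iopen,\lp{2}\right)$; the uniform $\Leb^\infty\left(\Iopen,\lp{2}\right)$-bound on $\partial_t u_n$ passes to the limit by weak-$\ast$ lower semicontinuity, giving $\partial_t u\in\Leb^\infty\left(\Iopen,\lp{2}\right)$. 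A standard estimate $\vnorma{\bar u_n(t)-u_n(t)}\le \tau\vnorma{\delta u_i}\le C\tau$ guarantees that $\bar u_n$ and $u_n$ share the same limit $u$.

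Then I would pass to the limit in the discrete identities \refe{DP} and \refe{DMP}, integrated against a fixed $\phi\in\hk{1}$ and in time. The linear terms converge by the weak/strong convergences just listed. For the Lipschitz reaction term, strong convergence $\bar u_n\strong u$ in $\Leb^2\left(\Iopen,\lp{2}\right)$ controls the $u$-argument, and since $\nabla\bar u_n$ converges only weakly, I would either invoke continuity of $f$ together with the strong $\Leb^2$-convergence of $\nabla u_n$ that follows from the $\sum\vnorma{\nabla u_i-\nabla u_{i-1}}^2$-bound in Proposition~\ref{lapui_bound} (giving equicontinuity in time and hence, with the spatial estimate, strong convergence of the gradients), or pass to a further subsequence converging a.e.; the bounded Lipschitz $f$ then yields convergence of $f(\bar u_n(\cdot-\tau),\nabla\bar u_n(\cdot-\tau))$ in $\Leb^2\left(\Iopen,\lp{2}\right)$. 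The convolution sums $\sum_{k}\bar K_n(t_k)\bar u_n(t-t_k)\tau$ and $\sum_k \bar K_n(t_k)\bar m_n(t-t_k)\tau$ are Riemann sums of $(K\ast u)(t)$ and $(K\ast m)(t)$; using $K_n\strong K$ uniformly and $\bar u_n\strong u$ in $\Leb^2$, together with the quadrature error $\OO{\tau}$, they converge to the genuine convolutions. In the limit the Rothe functions thus satisfy \refe{P} and \refe{MP} for all $\phi\in\hk{1}$ and a.e.\ $t\in\I$, and continuity of $u$ into $\hk{1}$ follows from $u\in\Leb^\infty\left(\Iopen,\hk{1}\right)$ with $\partial_t u\in\Leb^2\left(\Iopen,\lp{2}\right)$ by the standard interpolation $u\in\ckIX{}{\hk{1}}$ after reconciling weak-$\ast$ limits.

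The main obstacle I anticipate is the passage to the limit in the nonlinear convolution coupling: the product $\bar K_n(t_k)\bar u_n(t-t_k)$ mixes the kernel and the state at shifted time arguments, so one must simultaneously control the quadrature (Riemann-sum) error of order $\OO{\tau}$, the uniform convergence of $\bar K_n$, and the strong $\Leb^2$-convergence of the time-shifted $\bar u_n(\cdot-t_k)$. Handling the shift uniformly in $k$ and showing the discrete convolution converges to $K\ast u$ in, say, $\Leb^2\left(\Iopen,\lp{2}\right)$ is the delicate point; the uniform kernel bound from Proposition~\ref{Ki_bound}$(ii)$ and the $\Hi^1(0,T)$-compactness of $K_n$ are exactly what make this tractable. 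A secondary subtlety is securing strong rather than merely weak convergence of $\nabla\bar u_n$ so that the Lipschitz term $f(u,\nabla u)$ is identified correctly; the extra regularity $\sum_i\vnorma{\nabla u_i-\nabla u_{i-1}}^2\le C$ from Proposition~\ref{lapui_bound}$(i)$ is the tool I would lean on here.
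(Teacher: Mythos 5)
Your overall architecture coincides with the paper's: uniform bounds from Propositions~\ref{Ki_bound}--\ref{dKi_bound}, Rothe/Kačur compactness for $u_n$, Arzelà--Ascoli (equivalently $\Hi^1(0,T)\imbed\imbed\conTk{}$) for $K_n$, and a limit passage in \refe{DP}--\refe{DMP}. The convolution term, which you single out as the main obstacle, is in fact the easy part once $\bar K_n\to K$ uniformly and $\bar u_n\to u$ in $\ckIX{}{\lp{2}}$ are available. The genuine gap is precisely at the point you call a ``secondary subtlety'': the strong convergence of $\nabla\bar u_n$ in $\lp{2}$, which is indispensable both for identifying $f(u,\nabla u)$ and for the claimed regularity $u\in\ckIX{}{\hk{1}}$. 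Neither of the two mechanisms you offer works as stated. The bound $\sum_i\vnorma{\nabla\delta u_i}^2\tau\le C$ from Proposition~\ref{lapui_bound}$(i)$ does give H\"older-$\frac12$ equicontinuity of $t\mapsto\nabla u_n(t)$ in $\lp{2}$, but Arzelà--Ascoli in $\ckIX{}{\lp{2}}$ additionally requires relative compactness of the set $\{\nabla u_n(t)\}_n$ in $\lp{2}$ for each fixed $t$; equicontinuity in time plus a mere $\hk{1}$-bound does not provide this (one would need, e.g., a uniform $\Hi^{1+s}$-bound, i.e. nontrivial elliptic regularity for the Neumann problem on a Lipschitz domain, which you never invoke). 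Your fallback --- ``pass to a further subsequence converging a.e.'' --- is unfounded: weak convergence of $\nabla\bar u_n$ does not yield an a.e.-convergent subsequence. Likewise, your final remark that $u\in\Leb^\infty\left(\Iopen,\hk{1}\right)$ together with $\partial_t u\in\Leb^2\left(\Iopen,\lp{2}\right)$ gives $u\in\ckIX{}{\hk{1}}$ ``by standard interpolation'' is false: it gives only weak continuity into $\hk{1}$.

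The paper closes exactly this hole with a different, more elementary device: write, for two Rothe approximations,
\begin{equation*}
\vnorma{\nabla(\bar u_n-\bar u_m)}^2=\scal{-\Delta(\bar u_n-\bar u_m)}{\bar u_n-\bar u_m}-\scal{\bar g_n-\bar g_m}{\bar u_n-\bar u_m}_\Gamma ,
\end{equation*}
and use the uniform bound $\vnorma{\Delta\bar u_n(t)}\le C$ (Proposition~\ref{lapui_bound}$(i)$, via the Hahn--Banach extension argument following \refe{eq:DPi}), the strong convergence $\bar u_n\to u$ in $\ckIX{}{\lp{2}}$, the convergence $\bar g_n\to g$ in $\ckIX{}{\lpG{2}}$ (from $g\in\ckIX{1}{\lpG{2}}$), and the boundary convergence $\bar u_n\to u$ in $\ckIX{}{\lpG{2}}$ --- itself obtained from Ne\v{c}as' inequality together with the trace theorem and $\int_0^T\vnorma{\partial_t u_n}^2_{\hk{1}}\le C$. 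This shows $\bar u_n$ is Cauchy in $\ckIX{}{\hk{1}}$, which simultaneously delivers the strong gradient convergence needed for the Lipschitz term and the asserted regularity $u\in\ckIX{}{\hk{1}}$, with no elliptic regularity theory required. To repair your proof you would either have to reproduce this integration-by-parts Cauchy argument (including the Ne\v{c}as/trace step on $\Gamma$, which your proposal omits entirely) or justify spatial compactness of $\{\nabla u_n(t)\}$ by an explicit $\Hi^{3/2}$-regularity result for Lipschitz domains; as written, the limit identification of $f(\bar u_n,\nabla\bar u_n)$ is unsupported.
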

\begin{proof}
From Propositions \ref{ui_bound} and  \ref{gradui_bound} we have $\vnorma{u_j} \le  C$ and $\sum_{i=1}^n\vnorma{\delta u_i}^2\tau \le  C$, which means that for all $n>0$ it holds
\[\vnorma{\bar u_n(t)}_{\Hi^1(\Omega)}\le  C\quad \text{for all}\quad t\in\I,\quad \int_0^T\vnorma{\partial_t u_n(\xi)}^2\drm \xi\le  C.\]
Using \cite[Lemma 1.3.13]{kacur} there exists $u\in\ckIX{}{\lp{2}}\cap \Leb^\infty\left(\Iopen,\hk{1}\right)$ which is time-differentiable a.e. in $\I$ and a subsequence $(u_{n_k})_{k\in\NN}$ of $(u_n)_{n\in\NN}$ such that
\begin{numcases}{}
	u_{n_k}\to u,&in \quad $\ckIX{}{\lp{2}}$ \label{unktou}\\
	u_{n_k}(t)\rightharpoonup u(t),& in \quad $\hk{1},\quad \forall t\in\I$ \label{unkwtou}\\
	\bar u_{n_k}(t)\rightharpoonup u(t),& in \quad $\hk{1},\quad \forall t\in\I$ \label{bunkwtou}\\
	\partial_t u_{n_k}\rightharpoonup \partial_t u,&in \quad $\Leb^2\left(\Iopen,\lp{2}\right)$\label{dunkwtou}
\end{numcases}
which we denote again by $u_n$ for ease of reading.  Moreover since $\vnorma{\delta u_j} \le  C$ we have that $\partial_t u\in \Leb^\infty\left(\Iopen,\lp{2}\right) $ and $u:\I\to\lp{2}$ is Lipschitz continuous, i.e. $\vnorma{u(t)-u(t')}\le  C\abs{t-t'}$ for all $t,t'\in\I $.

Using Ne\v{c}as' inequality \cite{necas}, the fact that $\sum_{i=1}^n\vnorma{\nabla u_i}^2\tau $ is bounded (Proposition~\ref{ui_bound}) and $u\in \Leb^\infty\left(\Iopen,\hk{1}\right)$ we obtain
\[\int_0^T\vnorma{\bar u_n-u}^2_\Gamma \drm\xi\le  \veps \int_0^T\vnorma{\nabla(\bar u_n-u)}^2\drm\xi+C_\veps \int_0^T\vnorma{\bar u_n-u}^2\drm\xi\le  \veps+C_\veps \int_0^T\vnorma{\bar u_n-u}^2\drm\xi.\]
Passing to the limit and applying \refe{unktou} it holds
\begin{equation}
	\lim_{n\to+\infty}\int_0^T\vnorma{\bar u_n-u}^2_\Gamma \drm\xi\le  \veps \implies \bar u_n\to u, \quad \text{a.e. in }\quad \Iopen\times\Gamma.
	\label{untougam}
\end{equation}
The trace theorem and  Proposition \ref{lapui_bound}  give
\begin{multline*}
		\vnorma{u_n(t+\veps)-u_n(t)}_\Gamma  =\vnorma{\int_t^{t+\veps} \partial_t u_n(s)\di s}_\Gamma \le \int_t^{t+\veps} \vnorma{\partial_t u_n(s)}_\Gamma\di s 
		 \le \sqrt{\veps}\sqrt{ \int_t^{t+\veps} \vnorma{\partial_t u_n(s)}_\Gamma^2\di s } \\
		 \le \sqrt{\veps}\sqrt{ \int_0^T \vnorma{\partial_t u_n(s)}_\Gamma^2\di s } 
		 \le C\sqrt{\veps}\sqrt{ \int_0^T \vnorma{\partial_t u_n(s)}_{\hk{1}}^2\di s } \le C \sqrt{\veps}.
\end{multline*}
This together with \refe{untougam} yields that $u_n\to u$ in $\ckIX{}{\lpG{2}}$.
Integration by parts implies
\[\vnorma{\nabla \bar u_n-\nabla \bar u_m}^2=\scal{-\Delta (\bar u_n-\bar u_m)}{\bar u_n-\bar u_m} - \scal{\bar g_n-\bar g_m}{\bar u_n-\bar u_m}_\Gamma.\]
The assumption $g\in\ckIX{1}{\lpG{2}}$ ensures that $\bar g_n \to g$ in $\ckIX{}{\lpG{2}}$. Taking into account $\vnorma{ \Delta \bar u_n(t)}\le C$, cf. Proposition \ref{lapui_bound}, we see that $\bar u_n$ is a Cauchy sequence in $\ckIX{}{\hk{1}}$. 
Combining this with 
\[
	\vnorma{u_n(t)-\bar u_n(t)}_{\hk{1}}\le C\sqrt{\tau}\sqrt{\int_t^{t+\tau}\vnorma{\partial_t u_n(s)}^2_{\hk{1}}\di s}\le C\sqrt{\tau}
\]
and \refe{unktou} we conclude that
\begin{equation}
	\label{eq:gradun}
	\bar u_n\to u, \quad u_n\to u \qquad \mbox{ in } \ckIX{}{\hk{1}}.
\end{equation}
Using Propositions~\ref{Ki_bound} and \ref{dKi_bound} we have
\[\abs{\bar K_n(t)}\le  C\quad \text{for all}\quad t\in\I,\quad \int_0^T\abs{\partial_t K_n(\xi)}^2\drm \xi\le  C,\]
which means by the Arzel\`a-Ascoli theorem \cite[Theorem 11.28]{rudin} that there exists a subsequence $(K_{n_k})_{k\in\NN}$ (which we denote by the same symbol again) that converges uniformly on $\I$, say to $K$.
The reflexivity of the space $\Leb^2(0,T)$ implies that $\partial_t K_n\weak \partial_t K$ in $\Leb^2(0,T)$.
It becomes trivial to see that
\begin{equation}
	\lim_{n\to+\infty}\int_0^t\bar K_n(\bar h_n,\phi)\drm\xi=\int_0^t K(h,\phi)\drm\xi.
	\label{term3en4}
\end{equation}
Applying \refe{unktou} combined with the uniform convergence of $K_n$ we have
\begin{equation}
	\lim_{n\to+\infty}\int_0^t\sum_{k=1}^{\lfloor t\rfloor_\tau} (\bar K_n(t_k) \bar u_n(\xi-t_k)\tau,\phi)\drm\xi=\int_0^t(K\ast u,\phi)\drm\xi.
	\label{term5}
\end{equation}

Now, when we integrate \refe{DP} and let $n\to+\infty$ ($\tau\to0$) we obtain the following limit for the l.h.s. 
\[\int_0^t(\partial_t u,\phi)\drm\xi+\int_0^t(\nabla u,\nabla\phi)\drm\xi+\int_0^t( g,\phi)_{\Gamma}\drm\xi+\int_0^t K( h,\phi)\drm\xi+\int_0^t(K\ast u,\phi)\drm\xi.\]
First, by \refe{eq:gradun} we have
\[\lim_{n\to+\infty}\int_0^t\vnorma{f(\bar u_n(\xi),\nabla\bar u_n(\xi))-f(u(\xi),\nabla u(\xi))}\drm\xi=0.\]
Secondly, as $f$ is Lipschitz we have
\[
	\vnorma{f(\bar u_n(\xi-\tau),\nabla\bar u_n(\xi-\tau))-f(\bar u_n(\xi),\nabla\bar u_n(\xi))}\]
	\[\le  C \sqrt{\vnorma{\bar u_n(\xi-\tau)-\bar u_n(\xi)}^2+\vnorma{\nabla\bar u_n(\xi-\tau)-\nabla\bar u_n(\xi)}^2}
	=C \tau \sqrt{\vnorma{\partial_t u_n(\xi)}^2+\vnorma{\nabla \partial_tu_n(\xi)}^2}=C\tau\vnorma{\partial_t u_n(\xi)}_{\hk{1}}
\]
from which it follows that
\[
	\lim_{n\to+\infty}\abs{\int_0^t(f(\bar u_n(\xi-\tau),\nabla\bar u_n(\xi-\tau))-f(u(\xi),\nabla u(\xi)),\phi)\drm\xi }^2
	\le \vnorma{\phi}^2 C\lim_{n\to+\infty}\tau\int_0^t\vnorma{\partial_t u_n(\xi)}_{\hk{1}}^2\drm\xi=0
\]
as from Propositions~\ref{gradui_bound} and \ref{lapui_bound} we have
\[ \int_0^T\vnorma{\partial_t u_n(\xi)}^2\drm \xi+\int_0^T\vnorma{\nabla \partial_t u_n(\xi)}^2\drm \xi\le  C.\]
From the above we conclude that for the integrated r.h.s. of \refe{DP} it holds
\[\lim_{n\to+\infty}\int_0^t(f(\bar u_n(\xi-\tau),\nabla\bar u_n(\xi-\tau)),\phi)\drm\xi=\int_0^t(f(u(\xi),\nabla u(\xi)),\phi)\drm\xi.\]
We conclude that taking the limit for $n\to+\infty$ ($\tau\to0$) in \refe{DP} results in
\[\int_0^t(\partial_t u,\phi)\drm\xi+\int_0^t(\nabla u,\nabla\phi)\drm\xi+\int_0^t( g,\phi)_{\Gamma}\drm\xi+\int_0^t K( h,\phi)\drm\xi+\int_0^t(K\ast u,\phi)\drm\xi=\int_0^t(f(u(\xi),\nabla u(\xi)),\phi)\drm\xi.\]
Taking the derivative with respect to $t$ we arrive at \refe{P}. 

Finally, we have to pass to the limit for $\tau\to 0$ in \refe{DMPi} to arrive at \refe{MP}. This follows the same line as passing the limit in \refe{DPi}, therefore we skip the details.
\end{proof}

The convergences of Rothe's functions towards the weak solution \refe{P}-\refe{MP} (as stated in the proof of Theorem \ref{thm:existence}) have been shown for a subsequence. 
Note, that taking into account Theorem \ref{thm:uniqueness} we see that the whole Rothe's functions converge against the solution.

\section*{Conclusion}

A semilinear parabolic integro-differential problem of second order with an unknown convolution kernel is considered. The existence and uniqueness of a weak solution for the IBVP is proved. The missing integral kernel is recovered from an integral-type measurement.  
A numerical algorithm based on Rothe's method is established and the convergence of approximations towards the exact solution is demonstrated. 



 
\section*{Acknowledgment}
The research was supported by the IAP P7/02-project of the Belgian Science Policy.

\bibliographystyle{elsarticle-num}
\bibliography{JDE_rds}







\end {document}